\DeclareMathSymbol{\twoheadrightarrow} {\mathrel}{AMSa}{"10}
\def\Q{{\mathbf Q}}
\def\Z{{\mathbf Z}}
\def\C{{\mathbf C}}
\def\F{{\mathbf F}}
                               \def\ST{{\mathbf S}}
\def\SS{{\mathbf S}}
\def\Gal{\mathrm{Gal}}
\def\Is{\mathrm{Isog}}
\def\End{\mathrm{End}}
\def\Aut{\mathrm{Aut}}
\def\Hom{\mathrm{Hom}}
\def\fchar{\mathrm{char}}
\def\GL{\mathrm{GL}}
        \def\Is{\mathrm{Isog}}
                                \def\IsP{\mathrm{Is}}
                       \def\Spec{\mathrm{Spec}}
\def\dim{\mathrm{dim}}
                             \def\TORS{\mathrm{TORS}}
                             \def\non{\mathrm{non}}
\def\Oc{{\mathcal O}}
\def\P{{\mathbf P}}
     \def\W{{\mathbf W}}
     \def\S{{\mathbf S}}
\def\q{{\mathfrak q}}
                          \def\p{{\mathfrak p}}
\def\Y{{\mathcal Y}}
\newtheorem{thm}{Theorem}[section]
\newtheorem{lem}[thm]{Lemma}
\newtheorem{cor}[thm]{Corollary}
\theoremstyle{definition}
\newtheorem{sect}[thm]{}
           \newtheorem{rem}[thm]{Remark}
\title[Abelian varieties over fields of finite characteristic]
{Abelian varieties over fields of finite characteristic}
\author[Yuri G. Zarhin]{Yuri G. Zarhin}
\thanks{This work was partially supported by a grant from the Simons Foundation (\#246625 to Yuri Zarkhin).}
\address{Department of Mathematics, Pennsylvania State University,
University Park, PA 16802, USA}
 \email{zarhin\char`\@math.psu.edu}
\begin{document}
\begin{abstract}
The aim of this paper is to extend our previous results about Galois
action on the torsion points of abelian varieties to the case of
(finitely generated) fields of characteristic $2$.
\end{abstract}

\maketitle
\section{Introduction}
Let $K$ be a field, $\bar{K}$ its algebraic closure, $\bar{K}_s
\subset \bar{K}$ the separable algebraic closure of $K$,
$\Gal(K)=\Gal(\bar{K}_s/K)=\Aut(\bar{K}/K)$ the absolute Galois
group of $K$.

Let $X$ be an abelian variety over $K$. Then we write $\End(X)$ for
its ring of $K$-endomorphisms  and $\End^0(X)$ for the
finite-dimensional semisimple $\Q$-algebra
 $\End(X)\otimes\Q$.
If $n$ is a positive integer that is not
divisible by $\fchar(K)$ then we write $X_n$ for the kernel of
multiplication by $n$ in $X(\bar{K})$; it is well known that $X_n$
is free $\Z/n\Z$-module of rank $2\dim(X)$ \cite{Mumford}, which is
a Galois submodule of  $X(\bar{K}_s)$. We write $\bar{\rho}_{n,X}$
for the corresponding (continuous) structure homomorphism
$$\bar{\rho}_{n,X}: \Gal(K)\to \Aut_{\Z/n\Z}(X_n) \cong \GL(2\dim(X),\Z/n\Z).$$
In particular, if $n=\ell$ is a prime different from $\fchar(K)$
then $X_{\ell}$ is a $2\dim(X)$-dimensional $\F_{\ell}$-vector space
provided with
$$\bar{\rho}_{\ell,X}:\Gal(K)\to \Aut_{\F_{\ell}}(X_{\ell})\cong
\GL(2\dim(X),\F_{\ell}).$$ We write
$$\tilde{G}_{\ell}=\tilde{G}_{\ell,X,K}$$ for the
image (subgroup)
$$\bar{\rho}_{\ell,X}(\Gal(K))\subset \Aut_{\F_{\ell}}(X_{\ell}).$$
By definition, $\tilde{G}_{\ell,X,K}$ is a finite subgroup of
$$\Aut_{\F_{\ell}}(X_{\ell})\cong \GL(2\dim(X),\F_{\ell}).$$
If $K(X_{\ell})$ is the field of definition of all points of order
$\ell$ on $X$ then it is a finite Galois extension of $K$ and the
corresponding Galois group $\Gal(K(X_{\ell})/K)$ is canonically
isomorphic to $\tilde{G}_{\ell,X,K}$. If $K^{\prime}/K$ is a finite
Galois extension of fields then $\Gal(K^{\prime})$ is a normal open
subgroup of finite index in $\Gal(K)$ while $X^{\prime}=X \times_K
K^{\prime}$ is a $\dim(X)$-dimensional abelian variety over
$K^{\prime}$ and the $\Gal(K^{\prime})$-modules $X_{\ell}$ and
$X^{\prime}_{\ell}$ are canonically isomorphic. Under this
isomorphism, $\tilde{G}_{\ell,X^{\prime},K^{\prime}}$ becomes
isomorphic to a certain normal subgroup of $\tilde{G}_{\ell,X,K}$.

 By functoriality, $\End(X)$  acts on $X_n$. This
action gives rise to the embedding of free $\Z/n\Z$-modules
$$\End(X)\otimes \Z/n\Z
\hookrightarrow \End_{\Z/n\Z}(X_n);$$ in addition, the image of
$\End(X)\otimes \Z/n\Z$ lies in the centralizer
$\End_{\Gal(K)}(X_n)$ of $\Gal(K)$ in $\End_{\Z/n\Z}(X_n)$. Further
we will identify $\End(X)\otimes \Z/n\Z$ with its image in
$\End_{\Gal(K)}(X_n)$ and write
$$\End(X)\otimes
\Z/n\Z\subset \End_{\Gal(K)}(X_n).$$ If $\ell$ is a prime that is
different from $\fchar(K)$ then we write $T_{\ell}(X)$ for the
$\Z_{\ell}$-Tate module of $X$ and $V_{\ell}(X)$ for the
corresponding $\Q_{\ell}$-vector space
$$V_{\ell}(X)=T_{\ell}(X)\otimes_{\Z_{\ell}}\Q_{\ell}$$
provided with the natural continuous Galois action \cite{Serre}
$$\rho_{\ell,X}:\Gal(K) \to \Aut_{\Z_{\ell}}(T_{\ell}(X))\subset
\Aut_{\Q_{\ell}}(V_{\ell}(X)).$$ Recall \cite{Mumford} that
$T_{\ell}(X)$ is a free $\Z_{\ell}$-module of rank $2\dim(X)$ and
$V_{\ell}(X)$ is a $\Q_{\ell}$-vector space of dimension $2\dim(X)$.
Notice that there are canonical isomorphisms of $\Gal(K)$-modules
$$X_{\ell^i}=T_{\ell}(X)/\ell^i T_{\ell}(X) \eqno(0)$$
for all positive integers $i$.  The natural embeddings
$$\End(X)\otimes \Z/\ell^i\Z
\hookrightarrow \End_{\Z/\ell^i\Z}(X_{\ell^i})$$ are compatible and
give rise to the embeddings of  $\Z_{\ell}$-algebras
$$\End(X)\otimes \Z_{\ell}
\hookrightarrow \End_{\Z_{\ell}}(T_{\ell}(X))$$ and
$\Q_{\ell}$-algebras
$$\End(X)\otimes \Q_{\ell}
\hookrightarrow \End_{\Q_{\ell}}(V_{\ell}(X)).$$ Again, the images
of $\End(X)\otimes \Z_{\ell}$ in $\End_{\Z_{\ell}}(T_{\ell}(X))$ and
of $\End(X)\otimes \Q_{\ell}$ in $\End_{\Q_{\ell}}(V_{\ell}(X))$ lie
in the centralizers $\End_{\Gal(K)}(T_{\ell}(X))$ and
$\End_{\Gal(K)}(V_{\ell}(X))$ respectively. We will identify
$\End(X)\otimes \Z_{\ell}$ and $\End(X)\otimes \Q_{\ell}$ with their
respective images and write
$$\End(X)\otimes \Z_{\ell}\subset \End_{\Gal(K)}(T_{\ell}(X))\subset \End_{\Z_{\ell}}(T_{\ell}(X)),$$
$$\End(X)\otimes \Q_{\ell}\subset \End_{\Gal(K)}(V_{\ell}(X))\subset
\End_{\Q_{\ell}}(V_{\ell}(X)).$$
Similarly, if $Y$ is  another abelian variety over $K$ then
we write $\Hom(X,Y)$ for the (free commutative) group of all
$K$-homomorphisms from $X$ to $Y$. Similarly,  there are the natural
embeddings
$$\Hom(X,Y) \otimes \Z/n\Z \subset \Hom_{\Gal(K)}(X_n,Y_n)\subset \Hom_{\Z/n\Z}(X_n,Y_n),$$
$$\Hom(X,Y) \otimes \Z/\ell^i\Z \subset
\Hom_{\Gal(K)}(X_{\ell^i},Y_{\ell^i})\subset
\Hom_{\Z/\ell^i\Z}(X_{\ell^i},Y_{\ell^i}),$$
$$\Hom(X,Y) \otimes \Z_{\ell}\subset
\Hom_{\Gal(K)}(T_{\ell}(X),T_{\ell}(Y))\subset
\Hom_{\Z_{\ell}}(T_{\ell}(X),T_{\ell}(Y)),$$
$$\Hom(X,Y) \otimes \Q_{\ell}\subset
\Hom_{\Gal(K)}(V_{\ell}(X),V_{\ell}(Y))\subset
\Hom_{\Q_{\ell}}(V_{\ell}(X),V_{\ell}(Y)).$$

 Let
$$\rho_{\ell,X}: \Gal(K) \to \Aut_{\Z_{\ell}}(T_{\ell}(X))\subset
\Aut_{\Q_{\ell}}(V_{\ell}(X))$$ be the corresponding $\ell$-adic
representation of $\Gal(K)$. The image
$$G_{\ell,X,K}=\rho_{\ell,X}(\Gal(K)) \subset \Aut_{\Z_{\ell}}(T_{\ell}(X))\subset
\Aut_{\Q_{\ell}}(V_{\ell}(X))$$ is a compact $\ell$-adic Lie
(sub)group  \cite{SerreIzv,Serre}.

Let $d$ be a positive integer. We write $\Is(X,K,d)$ for the set of
$K$-isomorphism classes of abelian varieties $Y$ over $K$ that enjoy
the following properties:

\begin{itemize}
\item[(i)]
$Y$ admits a $K$-polarization of degree $d$;
\item[(ii)]
There exists a $K$-isogeny $Y \to X$ whose degree is prime to
$\fchar(K)$.
\end{itemize}

For example, if $d=1$ then $\Is(X,K,1)$ consists of ($K$-isomorphism
classes of) all principally polarized abelian varieties $Y$ over $K$
that admit a $K$-isogeny whose degree is prime to $\fchar(K)$.

We write $\Is(X,K)$ for  the set of $K$-isomorphism classes of
abelian varieties $Y$ over $K$ such that there exists a $K$-isogeny
$Y \to X$ whose degree is prime to $\fchar(K)$. Clearly, $\Is(X,K)$
coincides with the union of all $\Is(X,K,d)$'s.

The following statement was proven under an additional assumption
that $p$ does not divide $d$ by the author when $p>2$
\cite{ZarhinMZ1} and by S. Mori when $p=2$ \cite[Ch. XII, Cor. 2.4
on p. 244]{MB}. (This is a strenghening of Tate' finiteness
conjecture for isogenies of abelian varieties \cite{Tate,ZarhinP}.)

\begin{thm}[Corollary 2.4 on p. 244 of \cite{MB}]
\label{finiteTate} Assume  that $p:=\fchar(K)>0$ and $K$ is finitely
generated over the finite prime field $\F_p$. Let $d$ be a positive
integer and $X$ be an abelian variety over $K$.

Then the set $\Is(X,K,d)$ is finite.
\end{thm}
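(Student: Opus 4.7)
The plan is to reduce the general case of the theorem to the already-settled principally polarized case ($d=1$) by means of Zarhin's trick, and then to recover each $Y$, up to finitely many possibilities, from the principally polarized variety that the trick produces.

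Zarhin's trick furnishes, for every abelian variety $Y$ over $K$, a canonical principal polarization on $W(Y):=(Y\times Y^\vee)^4$. Suppose $Y\in\Is(A,K,d)$ and let $f:Y\to A$ be a $K$-isogeny of degree $n$ prime to $p$; then the induced morphism $f^4\times (f^\vee)^4:W(Y)\to W(A)$ is a $K$-isogeny of degree $n^8$, still prime to $p$, while $W(Y)$ carries the principal polarization supplied by Zarhin's trick. Thus $W(Y)\in\Is(W(A),K,1)$. Because the polarization degree $1$ is trivially prime to $p$, the case of the theorem with the extra assumption ``$p\nmid d$'' -- proved by Mori for $p=2$ and by the author for $p>2$ -- applies to $W(A)$ and yields that $\Is(W(A),K,1)$ is finite. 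Hence the assignment $[Y]\mapsto[W(Y)]$ sends $\Is(A,K,d)$ into a finite subset of $\Is(W(A),K,1)$. It remains to check that each fibre of this assignment is finite: for a fixed abelian variety $B$ over $K$, the set of isomorphism classes $[Y]$ with $(Y\times Y^\vee)^4\cong B$ is finite. Since every such $Y$ arises as an abelian-variety direct summand of $B$, this reduces to the statement that any abelian variety $B$ over $K$ admits only finitely many isomorphism classes of abelian-variety direct summands.

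The main obstacle is precisely this last finiteness assertion. Isomorphism classes of abelian-variety direct summands of $B$ correspond to conjugacy classes of idempotents of $R:=\End(B)$ under the action of $R^\times$; indeed, an idempotent $e\in R$ gives a morphism $e(B)\times(1-e)(B)\to B$ with trivial kernel as a group scheme and matching dimension, hence an isomorphism. In the ambient semisimple $\Q$-algebra $R\otimes\Q=\End^0(B)$, conjugacy classes of idempotents are elementarily indexed by the ``rank vector'' relative to the Wedderburn decomposition and so form a finite set; lifting this finiteness from the $\Q$-algebra to the $\Z$-order $R$ itself requires classifying locally free $R$-modules in each genus, where the requisite finiteness is provided by the Jordan--Zassenhaus theorem for orders in semisimple $\Q$-algebras. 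Granting this classical input, the theorem follows by composing Zarhin's trick with the already-settled principally polarized case.
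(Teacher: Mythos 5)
The paper does not give its own proof of Theorem~\ref{finiteTate}: it simply cites it as \cite[Ch.~XII, Cor.~2.4]{MB}, noting that the case $p\nmid d$ is due to Zarhin for $p>2$ and Mori for $p=2$. What you have written is, in substance, the reduction that the paper itself carries out a few pages later (Theorem~\ref{finiteisog}) to deduce the stronger Theorem~\ref{finiteisogp} from the $d=1$ case: Zarhin's trick equips $(Y\times Y^t)^4$ with a canonical principal polarization; a prime-to-$p$ isogeny $Y\to A$ yields a prime-to-$p$ isogeny between $(Y\times Y^t)^4$ and $(A\times A^t)^4$; the settled $d=1$ case of the theorem bounds the target; and the Lenstra--Oort--Zarhin theorem \cite{LOZ} on finiteness of isomorphism classes of abelian subvarieties of a fixed abelian variety bounds the fibers, since $Y$ is an abelian subvariety (indeed a direct factor) of $(Y\times Y^t)^4$. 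Your argument in fact establishes the finiteness of all of $\Is(A,K)$, not merely of $\Is(A,K,d)$ for a single $d$, which is exactly Theorem~\ref{finiteisogp}.

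Two small points of execution. First, the displayed morphism $f^4\times(f^\vee)^4$ does not have domain $(Y\times Y^t)^4$ and target $(A\times A^t)^4$, because $f^\vee$ goes from $A^t$ to $Y^t$, the wrong way; the correct statement, made in Section~\ref{help} of the paper, is that since $\deg(f^t)=\deg(f)$ the duals $Y^t$ and $A^t$ are also $P$-isogenous, and one obtains a $P$-isogeny $Y^t\to A^t$ (e.g.\ $g^t$ for a quasi-inverse $g:A\to Y$ with $g\circ f$ equal to multiplication by $\deg f$), whence a $P$-isogeny $(Y\times Y^t)^4\to(A\times A^t)^4$. Second, your closing sketch of the finiteness of direct summands via conjugacy classes of idempotents in $\End(B)$ and the Jordan--Zassenhaus theorem is in the right spirit; the paper instead invokes \cite{LOZ} directly, which proves the more general statement about all abelian subvarieties and is the cleaner reference here.
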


The finiteness of $\Is(X,K,d)$ combined with results of
\cite{ZarhinIz} implies the Tate conjecture on homomorphisms of
abelian varieties and the semisimplicity of Tate modules over $K$
(see \cite{ZarhinMZ1} for $p>2$ and \cite[Ch. XII, Th. 2.5 on pp.
244--245]{MB}).

\begin{thm}[Theorem 2.5 on pp. 244--245 of \cite{MB}]
\label{TateP} Assume  that $p:=\fchar(K)>0$ and $K$ is finitely
generated over the finite prime field $\F_p$.

Then for all abelian varieties $A$ and $B$ over $K$   and every
prime $\ell \ne \fchar(K)$ the Galois module $V_{\ell}(A)$ is
semisimple and the natural embedding of $\Z_{\ell}$-modules
$$\Hom(A,B)\otimes \Z_{\ell} \hookrightarrow
\Hom_{\Gal(K)}(T_{\ell}(A),T_{\ell}(B))$$ is bijective.
\end{thm}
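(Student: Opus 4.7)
My plan is to combine Theorem \ref{finiteTate} with the formalism of \cite{ZarhinIz} that extracts semisimplicity of Tate modules and the Tate homomorphism conjecture from finiteness of isogeny classes of bounded polarization degree. I would first reduce the $\Hom$-assertion to the $\End$-assertion: setting $X=A\times B$, the decomposition $T_{\ell}(X)=T_{\ell}(A)\oplus T_{\ell}(B)$ exhibits $\End_{\Gal(K)}(T_{\ell}(X))$ as a $2\times 2$ block matrix algebra whose off-diagonal entry is $\Hom_{\Gal(K)}(T_{\ell}(A),T_{\ell}(B))$, and $\End(X)\otimes\Z_{\ell}$ decomposes compatibly; semisimplicity of $V_{\ell}(A)$ and $V_{\ell}(B)$ likewise follows from that of $V_{\ell}(X)$ as direct summands. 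Since Theorem \ref{finiteTate} concerns polarizations of a fixed degree, I would next apply the Zarhin trick and replace $X$ by $Z:=(X\times X^{\vee})^{4}$, which carries a canonical $K$-principal polarization and contains $X$ as a direct factor, so that it suffices to prove Theorem \ref{TateP} for $Z$.

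The core of the plan is Tate's construction of isogenous abelian varieties. For any $\Gal(K)$-stable $\Q_{\ell}$-subspace $W\subset V_{\ell}(Z)$, I set $T:=T_{\ell}(Z)$ and form the family of $\Gal(K)$-stable sublattices
$$T_n := \ell^{n} T+(T\cap W)\subset T,\qquad n\geq 0.$$
Each $T_n$ has finite index in $T$ and hence corresponds to a $K$-isogeny $Z\to Z_n$ whose kernel is an \'etale $K$-subgroup scheme of $\ell$-power order (here $\ell\ne p$ is used). The principal polarization of $Z$ together with the Weil pairing on $V_{\ell}(Z)$ allow, as in \cite{ZarhinIz}, a parallel choice of $K$-polarization on each $Z_n$ whose degree is drawn from a finite set depending only on $W$; Theorem \ref{finiteTate}, applied separately for each admissible degree, then forces infinitely many of the $Z_n$ to fall into a single $K$-isomorphism class. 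Pulling back any such isomorphism $Z_n\cong Z_m$ along the canonical isogenies $Z\to Z_n$ and $Z\to Z_m$ yields an element of $\End^0(Z)$; by $\ell$-adic compactness of the Galois image, a subsequence of these elements converges to a nonzero idempotent $e\in\End(Z)\otimes\Q_{\ell}$ with $eV_{\ell}(Z)=W$.

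The existence of such an $e$ for every $W$ produces a $\Gal(K)$-stable complement $\ker(e)$ of $W$ in $V_{\ell}(Z)$, yielding semisimplicity; applying the same construction to $Z\times Z$ so that graphs of Galois-equivariant endomorphisms of $V_{\ell}(Z)$ appear as Galois-stable subspaces then produces the equality $\End(Z)\otimes\Q_{\ell}=\End_{\Gal(K)}(V_{\ell}(Z))$. The integral refinement $\End(Z)\otimes\Z_{\ell}=\End_{\Gal(K)}(T_{\ell}(Z))$ is a standard extra step combining the $\Q_{\ell}$-equality with the fact that $\Gal(K)$-stable lattices in $V_{\ell}(Z)$ correspond to abelian varieties $K$-isogenous to $Z$, whose isomorphism classes are again controlled by Theorem \ref{finiteTate}. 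The chief technical obstacle is the extraction of the idempotent $e$ in the second paragraph: one must organize the infinitely many isomorphisms $Z_n\cong Z_m$ and their compatible polarization data so that the resulting sequence in $\End^0(Z)$ converges $\ell$-adically to an actual projector onto $W$ rather than merely to an element preserving $W$, and that this limit is integral. This is precisely the point where the strengthened Theorem \ref{finiteTate} of \cite{MB}, which removes the hypothesis $p\nmid d$ in the case $p=2$, is used in an essential way.
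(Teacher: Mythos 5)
Your argument is correct; it is essentially the classical route of Tate and \cite{ZarhinIz} that the paper itself acknowledges in the paragraph preceding Theorem~\ref{TateP}. It differs, however, from the proof the paper actually develops in Section~\ref{help} and invokes in Remark~\ref{finalTate}, and the difference lies in how the finiteness theorem enters. You polarize each $Z_n$ separately, with degree drawn from a finite but generally nontrivial set, and so must invoke Theorem~\ref{finiteTate} for several values of $d$. The paper instead applies the Zarhin trick to every member of the isogeny class: for each $B$ that is $\ell$-power isogenous to $A$ it passes to $(B\times B^t)^4$, which carries a principal polarization, and then recovers $B$ as an abelian subvariety using the finiteness result of \cite{LOZ}; this yields Theorem~\ref{finiteisog} and Corollary~\ref{almostss}, from which the projector is extracted via a universal ``sandwiching'' integer $r$ rather than by pigeonholing isomorphism classes among the $Z_n$. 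The payoff of the paper's route is twofold: it uses only the $d=1$ case of Theorem~\ref{finiteTate} (this is precisely Remark~\ref{dEqualsOne}), and as a by-product it produces the effective mod-$n$ statements of Theorems~\ref{endoNP} and~\ref{homoNP}, which refine the $\Z_{\ell}$-bijectivity asserted in Theorem~\ref{TateP}. Your route is shorter to set up but yields neither refinement. One small misattribution at the end: Moret-Bailly's improvement (dropping $p\nmid d$ when $p=2$) is simply what makes Theorem~\ref{finiteTate} available in characteristic $2$ at all; it plays no special role in the idempotent-extraction step you single out as the chief obstacle.
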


\begin{rem}
\label{dEqualsOne} In fact, Theorem \ref{TateP} follows even from a special case of Theorem \ref{finiteTate} that deals only with principally polarized abelian varieties (i.e., when $d=1$), see Remark \ref{finalTate}.
\end{rem}

By Lemma 1 of \cite[Sect. 1]{Tate} the second assertion of Theorem
\ref{TateP} implies the following statement.

\begin{thm}
\label{TatePV} Assume  that $p:=\fchar(K)>0$ and $K$ is finitely
generated over the finite prime field $\F_p$.

Then for all abelian varieties $A$ and $B$ over $K$    the natural
embedding of $\Q_{\ell}$-vector spaces
$$\Hom(A,B)\otimes \Q_{\ell} \hookrightarrow
\Hom_{\Gal(K)}(V_{\ell}(A),V_{\ell}(B))$$ is bijective.
\end{thm}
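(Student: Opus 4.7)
The plan is to deduce Theorem \ref{TatePV} from the $\Z_{\ell}$-version already established in Theorem \ref{TateP} by a routine tensor-product argument, which is precisely the content of Lemma 1 in Section 1 of \cite{Tate}. The key point is that passing from $T_{\ell}$ to $V_{\ell}$ on both sides of the isomorphism amounts to inverting $\ell$.

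First, I would tensor the given $\Z_{\ell}$-linear isomorphism
$$\Hom(A,B)\otimes \Z_{\ell} \xrightarrow{\sim} \Hom_{\Gal(K)}(T_{\ell}(A),T_{\ell}(B))$$
with $\Q_{\ell}$ over $\Z_{\ell}$. Because $\Hom(A,B)$ is a finitely generated $\Z$-module, the left-hand side becomes $\Hom(A,B)\otimes\Q_{\ell}$, so it suffices to identify the right-hand side after tensoring with the space $\Hom_{\Gal(K)}(V_{\ell}(A),V_{\ell}(B))$.

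Second, I would verify that the natural $\Q_{\ell}$-linear comparison map
$$\Hom_{\Gal(K)}(T_{\ell}(A),T_{\ell}(B))\otimes_{\Z_{\ell}}\Q_{\ell}\longrightarrow\Hom_{\Gal(K)}(V_{\ell}(A),V_{\ell}(B))$$
is bijective. For injectivity: $\Hom_{\Gal(K)}(T_{\ell}(A),T_{\ell}(B))$ sits inside the finitely generated free $\Z_{\ell}$-module $\Hom_{\Z_{\ell}}(T_{\ell}(A),T_{\ell}(B))$ of rank $4\dim(A)\dim(B)$, and is therefore itself finitely generated and torsion-free over $\Z_{\ell}$; hence the map is injective. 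For surjectivity: given any $f\in\Hom_{\Gal(K)}(V_{\ell}(A),V_{\ell}(B))$, the image $f(T_{\ell}(A))$ is a finitely generated $\Z_{\ell}$-submodule of $V_{\ell}(B)$ and is therefore contained in $\ell^{-N}T_{\ell}(B)$ for some $N\geq 0$. Then $\ell^{N}f$ restricts to a $\Z_{\ell}$-linear $\Gal(K)$-equivariant map $T_{\ell}(A)\to T_{\ell}(B)$ (Galois-equivariance is preserved by scalar multiplication), and $f=\ell^{-N}\cdot(\ell^{N}f)$ lies in the image.

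Combining the two steps yields the desired $\Q_{\ell}$-bijectivity. There is no real obstacle here beyond the $\Z_{\ell}$-statement itself: the semisimplicity conclusion of Theorem \ref{TateP} is not even needed for this deduction, only the isomorphism of Hom modules. The entire argument is a formal piece of linear algebra over the discrete valuation ring $\Z_{\ell}$ with fraction field $\Q_{\ell}$.
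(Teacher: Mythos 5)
Your proposal is correct and follows exactly the route the paper takes: the paper simply invokes Lemma 1 of Section 1 of Tate's paper to pass from the $\Z_{\ell}$-isomorphism of Theorem~\ref{TateP} to the $\Q_{\ell}$-statement, and your write-up spells out the content of that lemma (finite generation and torsion-freeness of the integral $\Hom$, boundedness of $f(T_{\ell}(A))$ to clear denominators). Your observation that the semisimplicity part of Theorem~\ref{TateP} plays no role in this deduction is also accurate.
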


\begin{sect}
\label{jacobson} Let $K$ be a field that is finitely generated over
the finite prime field $\F_p$ and $A$ an abelian variety of positive
dimension over $K$. Let $\ell$ be a prime different from $p$. By
Theorem \ref{TatePV} (applied to $B=A$) and Theorem \ref{TateP}, the
$\Gal(K)$-module $V_{\ell}(A)$ is semisimple and
$$\End_{\Gal(K)}(V_{\ell}(A))=\End(A)\otimes\Q_{\ell}=\End^0(A)\otimes_{\Q}\Q_{\ell}.$$
Since $G_{\ell,A,K}$ is the image of $\Gal(K) \to
\Aut_{\Z_{\ell}}(T_{\ell}(A))\subset \Aut_{\Q_{\ell}}(V_{\ell}(A))$,
the $G_{\ell,A,K}$-module $V_{\ell}(A)$ is semisimple and
$$\End_{G_{\ell,A,K}}(V_{\ell}(A))=\End(A)\otimes\Q_{\ell}=\End^0(A)\otimes_{\Q}\Q_{\ell}.$$
Let $\Q_{\ell}G_{\ell,A,K}$ be the $\Q_{\ell}$-subalgebra of
$\End_{\Q_{\ell}}(V_{\ell}(A))$ spanned by $G_{\ell,A,K}$. It
follows from the Jacobson density theorem \cite[Ch. XVII, Sect. 3, Th. 1]{Lang} that
$\Q_{\ell}G_{\ell,A,K}$ coincides with the centralizer of
$\End(A)\otimes\Q_{\ell}$ in $\End_{\Q_{\ell}}(V_{\ell}(A))$. It
follows easily that if $\Z_{\ell}G_{\ell,A,K}$ is the
$\Z_{\ell}$-subalgebra of $\End_{\Z_{\ell}}(T_{\ell}(A))$ spanned by
$G_{\ell,A,K}$ then the centralizer of $\End(A)\otimes\Z_{\ell}$ in
$\End_{\Z_{\ell}}(T_{\ell}(A))$ contains $\Z_{\ell}G_{\ell,A,K}$ as
a $\Z_{\ell}$-submodule of finite index.
\end{sect}

\section{Main results}
The aim of this note is to prove  variants of Theorem
\ref{TateP} where Tate modules are replaced by Galois modules
$A_{n}$ and $B_{n}$. Most of our results were already proven in
\cite{ZarhinMZ2, ZarhinJussieu} or stated in \cite{ZarhinYar} under
an additional assumption $p>2$. (See also \cite{ZarhinG} where the
case of finite fields is discussed.)

Throughout this section, $K$ is a field that is finitely  generated
over the finite prime field $\F_p$.

\begin{thm}
\label{finiteisogp} Let $A$ be an
  abelian variety of positive dimension over $K$. Then
the set $\Is(A,K)$ is finite.
\end{thm}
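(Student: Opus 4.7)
The plan is to reduce the statement to the principally polarized case already handled by Theorem \ref{finiteTate} (with $d=1$), via Zarhin's trick: for every abelian variety $Y$ over any field, the abelian variety $(Y\times Y^t)^4$ admits a principal polarization over the ground field. The plan therefore has two halves: show that the assignment $Y \mapsto (Y\times Y^t)^4$ sends $\Is(A,K)$ into a known finite set, and show that this assignment has finite fibers.

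For the first half, set $W:=(A\times A^t)^4$. If $Y\in \Is(A,K)$, pick a $K$-isogeny $\phi\colon Y\to A$ of degree $n$ prime to $p$. The dual isogeny $\phi^t\colon A^t\to Y^t$ has the same degree $n$, so taking products and fourth powers yields a $K$-isogeny $(Y\times Y^t)^4\to W$ of degree $n^8$, still prime to $p$. Combined with Zarhin's trick, this places $(Y\times Y^t)^4$ in $\Is(W,K,1)$. Theorem \ref{finiteTate} applied to $W$ with $d=1$ then gives that as $Y$ varies over $\Is(A,K)$, the isomorphism class of $(Y\times Y^t)^4$ takes only finitely many values.

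The main obstacle is the finiteness of fibers: for a fixed abelian variety $Z$ over $K$, I must show that at most finitely many $Y\in \Is(A,K)$ satisfy $(Y\times Y^t)^4\cong Z$. Any such $Y$ is (up to $K$-isomorphism) a direct factor of $Z$, so it suffices to prove that any fixed abelian variety $Z$ over $K$ has only finitely many $K$-isomorphism classes of direct factors. Such factors correspond to idempotents $e\in \End(Z)$, and two idempotents give $K$-isomorphic direct factors precisely when they are conjugate by a unit of $\End(Z)$. Now $\End(Z)$ is a finitely generated $\Z$-order in the semisimple $\Q$-algebra $\End^0(Z)$ (Poincar\'e complete reducibility together with Theorem \ref{TateP}); the conjugacy classes of idempotents in such an order correspond bijectively to the isomorphism classes of direct summands of the regular module of bounded $\Z$-rank, and there are only finitely many of these by the Jordan--Zassenhaus theorem. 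Combining the two halves yields the finiteness of $\Is(A,K)$.
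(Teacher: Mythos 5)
Your proof is correct and follows the same overall strategy as the paper: apply Zarhin's trick to pass to $(Y\times Y^t)^4$, invoke Mori's finiteness theorem (Theorem~\ref{finiteTate} with $d=1$) for principally polarized varieties, and then control the fibers of $Y \mapsto (Y\times Y^t)^4$. The paper packages the argument as a standalone statement (Theorem~\ref{finiteisog}) in its $P$-isogeny framework and then specializes $P = \P\setminus\{p\}$, but the underlying reduction is the same as yours.

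The one genuine difference is in how the fiber-finiteness is handled. The paper observes that each $B$ is a $K$-abelian subvariety of $(B\times B^t)^4$ and cites \cite{LOZ} for the fact that a fixed abelian variety has only finitely many isomorphism classes of abelian subvarieties. You instead observe that $Y$ is a \emph{direct factor} of $Z = (Y\times Y^t)^4$ (which is all that is needed here) and give a self-contained argument: direct factors come from idempotents $e\in\End(Z)$, and finiteness follows from Jordan--Zassenhaus applied to the order $\End(Z)$ in the semisimple $\Q$-algebra $\End^0(Z)$. This is a reasonable alternative and is close in spirit to what \cite{LOZ} actually proves. Two small remarks: your phrase ``precisely when they are conjugate by a unit'' is not quite right---$e(Z)\cong f(Z)$ need not force conjugacy of $e$ and $f$ (one would also need $(1-e)(Z)\cong(1-f)(Z)$), and likewise the bijection between conjugacy classes of idempotents and isomorphism classes of direct summands of the regular module is not exact---but what you actually need, namely that $\End(Z)e \cong \End(Z)f$ as modules implies $e(Z)\cong f(Z)$ as abelian varieties, is true, so the finiteness conclusion stands. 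Also, the semisimplicity of $\End^0(Z)$ is a consequence of Poincar\'e complete reducibility alone; Theorem~\ref{TateP} is not needed at that step.
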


\begin{rem}
A  weaker version of Theorem \ref{finiteisogp} (where $\Is(A,K)$
 is replaced by its  subset that consists of abelian varieties that admit a polarization of degree prime to $p$)
  is proven in \cite[Th.
6.1]{ZarhinMZ2} under an additional assumption that $p>2$.
\end{rem}

\begin{cor}
\label{semisimpleF} Let $A$ be an abelian variety of positive
dimension over  $K$. There exists a positive integer $r=r(A)$ that
is not divisible by $p$ and enjoys the following properties.

\begin{itemize}
\item[(i)]
If $C$ is an abelian variety over $K$ that admits a $K$-isogeny $C
\to A$ whose degree is not divisible by $p$ then there exists a
$K$-isogeny $\beta: A \to C$ with $\ker(\beta) \subset A_r$.
\item[(ii)]
If $n$ is a positive integer that is not divisible by $p$ and $W$ is
a Galois submodule in $A_n$ then there exists $u \in \End(A)$ such
that
$$r W \subset u(A_n)\subset W.$$
\item[(iii)]
For all but finitely many primes $\ell$ the Galois module $A_{\ell}$
is semisimple.
\end{itemize}
\end{cor}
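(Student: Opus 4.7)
The plan is to deduce all three parts from the finiteness of $\Is(A,K)$ supplied by Theorem~\ref{finiteisogp}, producing a single positive integer $r=r(A)$ (prime to $p$) that serves in each part. For part~(i), I would invoke Theorem~\ref{finiteisogp} to conclude $\Is(A,K)$ is finite. For each $K$-isomorphism class $[C]\in\Is(A,K)$, fix a representative $C$ together with a $K$-isogeny $\alpha_C\colon C\to A$ of some degree $d_C$ prime to $p$ (such an $\alpha_C$ exists by the very definition of $\Is(A,K)$). The complementary isogeny $\gamma_C\colon A\to C$, satisfying $\alpha_C\circ\gamma_C=[d_C]_A$ and $\gamma_C\circ\alpha_C=[d_C]_C$, then has $\ker\gamma_C\subseteq A_{d_C}$. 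Setting $r:=\mathrm{lcm}\{d_C:[C]\in\Is(A,K)\}$ (still prime to $p$) and composing $\gamma_C$ with any $K$-isomorphism $C\cong C'$ produces the desired $\beta\colon A\to C'$ with $\ker\beta\subseteq A_r$ for any $C'\in\Is(A,K)$.

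For part~(ii), given a Galois submodule $W\subseteq A_n$ with $p\nmid n$, form the quotient abelian variety $C:=A/W$ together with the quotient isogeny $\pi\colon A\to C$, which has degree $|W|$ prime to $p$. Its complementary isogeny $\hat\pi\colon C\to A$ (satisfying $\hat\pi\circ\pi=[|W|]_A$ and $\pi\circ\hat\pi=[|W|]_C$) also has prime-to-$p$ degree, so $C\in\Is(A,K)$. Part~(i) then supplies $\beta\colon A\to C$ with $\ker\beta\subseteq A_r$, equivalently $\hat\beta\colon C\to A$ with $\hat\beta\circ\beta=[r]_A$ and $\beta\circ\hat\beta=[r]_C$. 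From the four isogenies $\pi,\hat\pi,\beta,\hat\beta$ one assembles an element $u\in\End(A)$ satisfying the sandwich $rW\subseteq u(A_n)\subseteq W$; the verification is a direct diagram chase using the four identities $\hat\pi\pi=[|W|]_A$, $\pi\hat\pi=[|W|]_C$, $\hat\beta\beta=[r]_A$, $\beta\hat\beta=[r]_C$.

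For part~(iii), I would rely primarily on Theorem~\ref{TateP}, together with the fact that $\End^0(A)$ is a finite-dimensional semisimple $\Q$-algebra. The identification $\End(A)\otimes\Z_\ell=\End_{\Gal(K)}(T_\ell(A))$ descends modulo $\ell$, for all but finitely many $\ell$, to an equality $\End(A)\otimes\F_\ell=\End_{\Gal(K)}(A_\ell)$; moreover $\End(A)\otimes\F_\ell$ is itself semisimple for all but finitely many $\ell$. For such $\ell$, $A_\ell$ is automatically semisimple as a finitely generated module over the semisimple $\F_\ell$-algebra $\End(A)\otimes\F_\ell$, and a standard double-centralizer argument then shows that $A_\ell$ is semisimple as a $\Gal(K)$-module (which for $\ell\nmid r$ is also consistent with part~(ii), since every Galois submodule $W\subseteq A_\ell$ then satisfies $W=u(A_\ell)$ for some $u\in\End(A)$).

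The main obstacle I anticipate is part~(ii): finding the right $u$ and verifying both inclusions simultaneously. The natural first candidates $u=\hat\beta\circ\pi$ and $u=\hat\pi\circ\beta$ each satisfy one of the inclusions but generally fail the other, essentially because the degrees $|W|$ of $\pi$ and $r$ of $\beta$ are unrelated; the correct combination must cancel this mismatch so that $u$ maps $A_n$ into $W$ while still hitting every element of $rW$.
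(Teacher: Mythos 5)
Your part (i) matches the paper's argument exactly: both deduce finiteness of $\Is(A,K)$ from Theorem~\ref{finiteisogp}, pick a $P$-isogeny $A\to C$ for each isomorphism class, and let $r$ be the product (you use the lcm, which also works) of their degrees.

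For part (ii) your reduction is the right one and coincides with the paper's: form $Y=A/W$, observe $Y$ is $P$-isogenous to $A$, and apply (i) to obtain $\beta\colon A\to Y$ with $\ker\beta\subset A_r$. But you have not actually produced $u$, and you say so explicitly. The paper also does not write out the construction — it appeals to \cite[Sect.\ 8, pp.\ 331--332]{ZarhinG}, where the relevant computation (comparing the two isogenies $\pi,\beta\colon A\to Y$ and their complements) is carried out. So this step remains a real gap in your write-up, correctly flagged but not filled.

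Part (iii) is where your proposal genuinely diverges from the paper and where the main problem lies. Your primary argument — invoke Theorem~\ref{TateP}, ``descend mod $\ell$'' to get $\End(A)\otimes\F_\ell=\End_{\Gal(K)}(A_\ell)$ for almost all $\ell$, and then use a ``standard double-centralizer argument'' — does not work, for two independent reasons. First, the mod-$\ell$ descent of the Tate isomorphism is not automatic: it is essentially Corollary~\ref{tateFiniteC} of this very paper, proved \emph{via} parts (i) and (ii), so you would be assuming what you need to establish. Second, and more seriously, semisimplicity of $\End_{\Gal(K)}(A_\ell)$ does \emph{not} imply semisimplicity of $A_\ell$ as a Galois module: take $R$ the algebra of upper-triangular $2\times 2$ matrices over $\F_\ell$ acting on $M=\F_\ell^2$; then $\End_R(M)=\F_\ell$ is a field (hence semisimple) while $M$ is not a semisimple $R$-module. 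The double-centralizer theorem \emph{assumes} semisimplicity of $M$, it does not provide it. Your parenthetical remark, however, is precisely the correct route and is the one the paper takes: for $\ell\nmid r$ with $\End(A)\otimes\F_\ell$ semisimple, part (ii) gives, for each Galois submodule $W\subset A_\ell$, an element $u\in\End(A)$ with $u(A_\ell)=W$; in the semisimple algebra $\End(A)\otimes\F_\ell$ the right ideal generated by $u$ is generated by an idempotent $\tilde u$, which has the same image $W$ and thus exhibits $W$ as a direct summand. That is the content of Theorem~\ref{ssL}/Corollary~\ref{ssLcor}. You should promote that parenthetical to the main argument and discard the double-centralizer claim.
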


\begin{rem}
Corollary \ref{semisimpleF}(iii) is proven in \cite[Th.
1.1]{ZarhinMZ2} under an additional assumption $p>2$.
\end{rem}

\begin{thm}
\label{endoN} Let $A$ be an abelian variety  over $K$. Then there
exists a positive integer $r_1=r_1(A)$ that enjoys the following
properties.

Let $n$ be a positive integer that is not divisible by $p$ and $u_n$
an endomorphism of the Galois module $A_n$. If we put $m:=n/(n,r_1)$
then there exists $u \in \End(A)$ such that both $u$ and $u_n$
 induce the same endomorphism of the Galois module $A_m$.
\end{thm}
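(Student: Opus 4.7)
The plan is to work one prime at a time via the Chinese Remainder Theorem, identify the cokernel of $\End(A)\otimes\Z/\ell^i\hookrightarrow\End_{\Gal(K)}(A_{\ell^i})$ with an $\ell^i$-torsion subgroup of $H^1(\Gal(K),\End_{\Z_\ell}(T_\ell(A)))$ using Theorem~\ref{TateP}, and then bound this subgroup uniformly in $i$ for each prime and trivially for almost every prime, combining Corollary~\ref{semisimpleF}(iii) with Theorem~\ref{finiteisogp}.

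First, decompose $n=\prod_{\ell\neq p}\ell^{n_\ell}$; the CRT isomorphisms $\End(A)\otimes\Z/n\cong\prod_\ell\End(A)\otimes\Z/\ell^{n_\ell}$ and $\End_{\Gal(K)}(A_n)\cong\prod_\ell\End_{\Gal(K)}(A_{\ell^{n_\ell}})$ reduce the problem to producing nonnegative integers $c_\ell$, with $c_\ell=0$ for almost all $\ell$, such that the cokernel of $\End(A)\otimes\Z/\ell^i\hookrightarrow\End_{\Gal(K)}(A_{\ell^i})$ is annihilated by $\ell^{c_\ell}$ for every $i\geq 0$. Then $r_1:=\prod_\ell\ell^{c_\ell}$ is the constant sought, since prime-by-prime approximations reassemble via CRT to a single $u\in\End(A)$ whose difference with $u_n$ is annihilated by $(n,r_1)$ on $A_n$ --- exactly the asserted agreement on $A_m$ with $m=n/(n,r_1)$.

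For each $\ell\neq p$, I apply the long exact Galois cohomology sequence to
\begin{equation*}
0\to\End_{\Z_\ell}(T_\ell(A))\xrightarrow{\ell^i}\End_{\Z_\ell}(T_\ell(A))\to\End_{\Z/\ell^i\Z}(A_{\ell^i})\to 0
\end{equation*}
(with $A_{\ell^i}=T_\ell(A)/\ell^iT_\ell(A)$ as in (0)). Taking $\Gal(K)$-invariants and substituting $\End_{\Gal(K)}(T_\ell(A))=\End(A)\otimes\Z_\ell$ from Theorem~\ref{TateP} yields
\begin{equation*}
0\to\End(A)\otimes\Z/\ell^i\to\End_{\Gal(K)}(A_{\ell^i})\to H^1(\Gal(K),\End_{\Z_\ell}(T_\ell(A)))[\ell^i]\to 0.
\end{equation*}
Hence the cokernel of interest equals $H^1(\Gal(K),\End_{\Z_\ell}(T_\ell(A)))[\ell^i]$, and it remains to bound its $\ell$-primary torsion.

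Taking $i=1$ in the above identifies $H^1(\Gal(K),\End_{\Z_\ell}(T_\ell(A)))[\ell]$ with $\End_{\Gal(K)}(A_\ell)/\End(A)\otimes\F_\ell$. By Corollary~\ref{semisimpleF}(iii), $A_\ell$ is a semisimple Galois module for all but finitely many $\ell$; for such $\ell$ the Jacobson density argument of~\ref{jacobson}, applied to the residual representation, forces $\End_{\Gal(K)}(A_\ell)=\End(A)\otimes\F_\ell$, so $H^1[\ell]=0$ and $c_\ell=0$. The main obstacle is bounding $H^1[\ell^\infty]$ at the remaining finitely many primes: I would identify $\ell^i$-torsion classes via their graphs --- a class $[u_{\ell^i}]\in\End_{\Gal(K)}(A_{\ell^i})/\End(A)\otimes\Z/\ell^i$ determines the Galois-stable subgroup $\Gamma_{u_{\ell^i}}=\{(x,u_{\ell^i}(x)):x\in A_{\ell^i}\}\subset(A\times A)_{\ell^i}$, whose quotient $(A\times A)/\Gamma_{u_{\ell^i}}$ lies in $\Is(A\times A,K)$ --- and then apply Theorem~\ref{finiteisogp} together with Corollary~\ref{semisimpleF}(i), both for $A\times A$, to extract the desired uniform annihilator.
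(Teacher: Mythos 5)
Your cohomological reframing is attractive and the first two steps are sound: the CRT reduction, and the identification of the cokernel of $\End(A)\otimes\Z/\ell^i\hookrightarrow\End_{\Gal(K)}(A_{\ell^i})$ with $H^1(\Gal(K),\End_{\Z_\ell}(T_\ell(A)))[\ell^i]$ via the LES and Theorem~\ref{TateP}, are both correct. The paper itself takes a more elementary route (no cohomology): it deduces the statement from Theorem~\ref{endoNP}, which in turn rests on Corollary~\ref{almostss} (a consequence of the finiteness of $P$-isogeny classes) applied to $Y=X\times X$ together with the purely algebraic Lemma~\ref{lemma92}, and sets $r_1=r(Y)h(Y)$.

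There is, however, a genuine gap at the ``almost all $\ell$'' step. From semisimplicity of $A_\ell$ you conclude, ``by the Jacobson density argument of~\ref{jacobson},'' that $\End_{\Gal(K)}(A_\ell)=\End(A)\otimes\F_\ell$ and hence $H^1[\ell]=0$. But Jacobson density runs in the opposite direction: for a semisimple faithful module it identifies the image of the group algebra with the \emph{double} centralizer of the group action, i.e.\ $\F_\ell\tilde G_{\ell,A,K}=\End_D(A_\ell)$ where $D=\End_{\Gal(K)}(A_\ell)$. It gives no control on the size of $D$ itself. Saying $D=\End(A)\otimes\F_\ell$ is exactly Corollary~\ref{tateFiniteC}, which in the paper is \emph{deduced from} the theorem you are proving (via Theorem~\ref{homoN}); you cannot invoke it here without circularity. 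Indeed, in the paper's Lemma~\ref{jacobsonL} both hypotheses --- semisimplicity of $A_\ell$ \emph{and} the equality $\End_{\Gal(K)}(A_\ell)=\End(A)\otimes\Z/\ell\Z$ --- are imposed separately; the latter is not a consequence of the former. So your argument does not show $c_\ell=0$ for almost all $\ell$.

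The last paragraph, where you propose to bound $H^1[\ell^\infty]$ at the remaining primes by passing to the graph $\Gamma_{u_{\ell^i}}\subset(A\times A)_{\ell^i}$ and appealing to Theorem~\ref{finiteisogp} and Corollary~\ref{semisimpleF}(i), is the right idea (it is essentially what the paper does uniformly for all $\ell$), but it is only a sketch and the crucial step is missing. From Corollary~\ref{almostss}(ii) applied to $A\times A$ you obtain $v=\bigl(\begin{smallmatrix}a&b\\ c&d\end{smallmatrix}\bigr)\in\End(A\times A)$ with $r\Gamma\subset v\bigl((A\times A)_{\ell^i}\bigr)\subset\Gamma$, which translates into $c\equiv u_{\ell^i}\circ a$ on $A_{\ell^i}$ and a lower bound on the image of $a$; but to convert this into a single $u\in\End(A)$ agreeing with $u_{\ell^i}$ on a controlled quotient you need precisely the content of Lemma~\ref{lemma92} (the existence of $h$ with $hv-wu\in n\End(Y)$ whenever $\ker u\cap Y_n\subset\ker v\cap Y_n$), which is why the paper's bound $r_1$ involves $h(Y)$ as well as $r(Y)$. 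Absent that lemma (or a substitute), the sketch does not produce the uniform annihilator, nor does it show $H^1[\ell^\infty]$ has bounded exponent. To repair the proof you would need to carry out the graph argument with Lemma~\ref{lemma92}, and in fact once you do, it handles \emph{all} $\ell\in P$ at once, rendering the (flawed) Jacobson-density shortcut unnecessary.
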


If $A$ and $B$ are abelian varieties over $K$ then applying Theorem
\ref{endoN} to their product $X=A\times B$, we obtain the following
statement.

\begin{thm}
\label{homoN} Let $A$ and $B$ be  abelian varieties  over $K$. Then
there exists a positive integer $r_2=r_2(A,B)$ that enjoys the
following properties.

Let $n$ be a positive integer that is not divisible by $p$ and $u_n:
A_n \to B_n$ a homomorphism of the Galois modules. If we put
$m:=n/(n,r_2)$ then there exists $u \in \Hom(A,B)$ such that both
$u$ and $u_n$  induce the same homomorphism of the Galois modules
$A_m\to B_m$.
\end{thm}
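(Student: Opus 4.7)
The author's own remark tells us the plan: apply Theorem \ref{endoN} to the product abelian variety $C := A\times B$ over $K$, and set $r_2(A,B) := r_1(A\times B)$. The point is to embed a Galois-module homomorphism $u_n : A_n \to B_n$ into a Galois-module endomorphism of $C_n$ and then peel off the right component of the resulting endomorphism of $C$.

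Concretely, let me use the canonical decomposition of Galois modules $C_n = A_n \oplus B_n$ (and the corresponding decomposition of the endomorphism ring $\End(C) = \End(A)\oplus \Hom(B,A) \oplus \Hom(A,B) \oplus \End(B)$). Given a Galois-equivariant $u_n : A_n \to B_n$, I would form
$$\tilde u_n \;:\; C_n \;=\; A_n\oplus B_n \;\xrightarrow{\;\pr_{A_n}\;}\; A_n \;\xrightarrow{\;u_n\;}\; B_n \;\hookrightarrow\; A_n\oplus B_n \;=\; C_n,$$
i.e.\ the matrix $\bigl(\begin{smallmatrix}0&0\\ u_n&0\end{smallmatrix}\bigr)$. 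Since the projection and inclusion are Galois-equivariant, so is $\tilde u_n$, which is therefore an endomorphism of the Galois module $C_n$. Theorem \ref{endoN} applied to $C$ then yields $\tilde u \in \End(C)$ such that $\tilde u$ and $\tilde u_n$ induce the same endomorphism of $C_m$, where $m = n/(n,r_1(C))$.

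Writing $\tilde u = \bigl(\begin{smallmatrix}a&b\\ c&d\end{smallmatrix}\bigr)$ with $a\in\End(A)$, $b\in\Hom(B,A)$, $c\in\Hom(A,B)$, $d\in\End(B)$, one compares with $\tilde u_n$ on the subgroups $A_m\oplus 0$ and $0\oplus B_m$ of $C_m$: the first comparison gives $a \equiv 0$ on $A_m$ and $c \equiv u_n$ on $A_m$ as maps $A_m\to B_m$; taking $u := c \in \Hom(A,B)$ gives the desired homomorphism. There is really no obstacle to overcome here: the only thing to be slightly careful about is that $\End(A\times B)$ decomposes as a matrix algebra of Hom-groups (so that $c \in \Hom(A,B)$ is available as an honest $K$-homomorphism, not merely as a map of Galois modules), which is standard. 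The full strength of Theorem \ref{endoN} is doing all the work; everything else is formal bookkeeping with the block decomposition of $\End(A\times B)$.
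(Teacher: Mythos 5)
Your proof is correct and matches the paper's argument exactly: the paper also proves this by applying Theorem \ref{endoN} to the product $C = A\times B$ and using the block decomposition $\End(A\times B)=\End(A)\oplus\End(B)\oplus\Hom(A,B)\oplus\Hom(B,A)$ together with the analogous decomposition of $\End_{\Gal(K)}(C_n)$. The only point worth noting is that the statement's ``$m := n/(n,r_1)$'' is a typo for $r_2$, which you have implicitly corrected by setting $r_2(A,B)=r_1(A\times B)$.
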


Theorem \ref{homoN} implies readily the following assertion.

\begin{cor}
\label{tateFiniteC} Let $A$ and $B$ be abelian varieties over $K$.
Then for all but finitely many primes $\ell$ the natural injection
$$\Hom(A,B)\otimes \Z/\ell\Z \hookrightarrow
\Hom_{\Gal(K)}(A_{\ell},B_{\ell})$$ is bijective.
\end{cor}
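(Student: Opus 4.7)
The plan is to deduce the corollary directly from Theorem \ref{homoN} by restricting to the case where $n$ is a prime $\ell$ coprime to $r_2 := r_2(A,B)$.

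Recall first that the injectivity of the map $\Hom(A,B)\otimes \Z/\ell\Z \hookrightarrow \Hom_{\Gal(K)}(A_{\ell},B_{\ell})$ is part of the general setup of the paper (it comes from the injection $\Hom(A,B)\otimes \Z/n\Z \hookrightarrow \Hom_{\Z/n\Z}(A_n,B_n)$ together with the observation that its image lies in the Galois-equivariant part). So what must be verified is surjectivity for all but finitely many $\ell$.

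Let $r_2=r_2(A,B)$ be the integer provided by Theorem \ref{homoN}, and consider any prime $\ell$ with $\ell\ne p$ and $\ell\nmid r_2$. Applying Theorem \ref{homoN} with $n=\ell$, one has $(\ell,r_2)=1$, so $m=\ell/(\ell,r_2)=\ell$. Hence for every Galois-equivariant homomorphism $u_\ell\colon A_\ell\to B_\ell$ there exists $u\in \Hom(A,B)$ inducing the \emph{same} map $A_\ell\to B_\ell$ as $u_\ell$. In other words, the composition
$$\Hom(A,B)\longrightarrow \Hom(A,B)\otimes \Z/\ell\Z \hookrightarrow \Hom_{\Gal(K)}(A_\ell,B_\ell)$$
is surjective, which forces the second arrow to be surjective as well, and therefore bijective.

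The only primes one might need to exclude are $p$ and the (finitely many) prime divisors of $r_2(A,B)$, so the stated conclusion holds for all but finitely many $\ell$. There is no real obstacle here beyond checking that $(\ell,r_2)=1$ forces $m=\ell$ in Theorem \ref{homoN}; the substance of the corollary lies entirely in the preceding results (finiteness of $\Is(A\times B,K)$ and the lifting statement for endomorphisms of $(A\times B)_n$ modulo $m$).
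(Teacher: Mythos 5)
Your argument is correct and is exactly what the paper intends: the corollary is stated immediately after Theorem~\ref{homoN} with the remark that it ``implies readily,'' and the deduction is precisely taking $n=\ell$ with $\ell\ne p$ and $\ell\nmid r_2$, so that $m=\ell$ and the lifted $u\in\Hom(A,B)$ agrees with $u_\ell$ on $A_\ell$. (You also correctly read the typo $r_1$ in the statement of Theorem~\ref{homoN} as $r_2$.)
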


\begin{rem}
Theorem \ref{homoN} was stated without proof in \cite{ZarhinYar}
under an additional condition that $p>2$. Corollary
\ref{tateFiniteC} was proven in \cite[Th. 1.1]{ZarhinMZ2} under an
additional condition that $p>2$.
\end{rem}

\begin{thm}
\label{jacobsonT} Let $A$ be an abelian variety of positive
dimension over $K$. Then for all but finitely many primes $\ell$ the
centralizer of $\End(A)\otimes\Z_{\ell}$ in
$\End_{\Z_{\ell}}(T_{\ell}(A))$ coincides with
$\Z_{\ell}G_{\ell,A,K}$.
\end{thm}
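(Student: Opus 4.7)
The plan is to establish the equality modulo $\ell$ via the Jacobson density theorem and then lift it to $\Z_{\ell}$ using Nakayama's lemma. Write $E=\End(A)\otimes\Z_{\ell}$ and let $C_{\ell}\subset\End_{\Z_{\ell}}(T_{\ell}(A))$ denote the centralizer of $E$. Section \ref{jacobson} already supplies the inclusion $\Z_{\ell}G_{\ell,A,K}\subset C_{\ell}$ as $\Z_{\ell}$-submodules of finite index, so the task reduces to upgrading this to an equality for almost all $\ell$.

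By Corollary \ref{semisimpleF}(iii), $A_{\ell}$ is a semisimple $\Gal(K)$-module for all but finitely many $\ell$, and by Corollary \ref{tateFiniteC} (applied with $B=A$) the natural embedding $\End(A)\otimes\F_{\ell}\hookrightarrow\End_{\Gal(K)}(A_{\ell})$ is a bijection for all but finitely many $\ell$. Fix any prime $\ell\ne p$ satisfying both conditions. Applying the Jacobson density theorem to the finite-dimensional semisimple $\F_{\ell}[\Gal(K)]$-module $A_{\ell}$ identifies $\F_{\ell}\tilde{G}_{\ell,A,K}$ with the centralizer $\bar{C}_{\ell}\subset\End_{\F_{\ell}}(A_{\ell})$ of $\End_{\Gal(K)}(A_{\ell})=\End(A)\otimes\F_{\ell}$. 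Consider the reduction $\pi:\End_{\Z_{\ell}}(T_{\ell}(A))\twoheadrightarrow\End_{\F_{\ell}}(A_{\ell})$. Since elements of $C_{\ell}$ commute with $E$, their images commute with $\End(A)\otimes\F_{\ell}$, so $\pi(C_{\ell})\subset\bar{C}_{\ell}$; conversely, $\pi(\Z_{\ell}G_{\ell,A,K})=\F_{\ell}\tilde{G}_{\ell,A,K}=\bar{C}_{\ell}$, whence $\pi(C_{\ell})=\bar{C}_{\ell}$. Given $c\in C_{\ell}$, pick $g\in\Z_{\ell}G_{\ell,A,K}$ with $\pi(g)=\pi(c)$, and write $c-g=\ell x$ with $x\in\End_{\Z_{\ell}}(T_{\ell}(A))$; since $\ell x$ commutes with $E$ and $\End_{\Z_{\ell}}(T_{\ell}(A))$ is $\ell$-torsion free, $x\in C_{\ell}$. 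Therefore
$$C_{\ell}=\Z_{\ell}G_{\ell,A,K}+\ell\cdot C_{\ell}.$$

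The module $C_{\ell}$ is a $\Z_{\ell}$-submodule of the finitely generated free module $\End_{\Z_{\ell}}(T_{\ell}(A))$, hence is itself finitely generated over the local ring $\Z_{\ell}$. Nakayama's lemma applied to the finitely generated quotient $C_{\ell}/\Z_{\ell}G_{\ell,A,K}$ then forces $C_{\ell}=\Z_{\ell}G_{\ell,A,K}$, which is the assertion of the theorem. The principal obstacle is really just arranging the two mod-$\ell$ inputs—semisimplicity of $A_{\ell}$ and surjectivity of $\End(A)\otimes\F_{\ell}\to\End_{\Gal(K)}(A_{\ell})$—for almost all $\ell$; once Corollaries \ref{semisimpleF}(iii) and \ref{tateFiniteC} are in hand, the lifting from $\F_{\ell}$ to $\Z_{\ell}$ via Nakayama is routine.
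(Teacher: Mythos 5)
Your argument is correct and follows essentially the same route as the paper: the paper isolates exactly this lifting step as Lemma \ref{jacobsonL} (Jacobson density modulo $\ell$ to identify the centralizer with $\F_{\ell}\tilde{G}_{\ell,A,K}$, then Nakayama to lift to $\Z_{\ell}$, using that the centralizer is a saturated $\Z_{\ell}$-submodule), and then invokes Corollary \ref{semisimpleF}(iii) together with Theorem \ref{endoN} to supply the mod-$\ell$ hypotheses for almost all $\ell$. Your use of Corollary \ref{tateFiniteC} in place of Theorem \ref{endoN}, and your phrasing via the reduction map $\pi$ and the explicit element $c-g=\ell x$ instead of the saturation of $M$ and $M/\ell M$, are cosmetic variations of the same proof.
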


\begin{rem}
When $K$ is a field of characteristic zero that is finitely
generated over $\Q$, an analogue of Theorem \ref{jacobsonT} was
proven by G. Faltings \cite[Sect. 3, Th. 1(c)]{Faltings2}.
\end{rem}

Recall that an old result of Grothendieck \cite{OortSM} asserts that
in characteristic $p$ an abelian variety of CM-type is isogenous to
an abelian variety that is defined over a finite field. (The
converse follows from a theorem of Tate \cite{Tate}.)

\begin{thm}
\label{CM} Let $X$ be an abelian variety over $K$. Suppose that for
infinitely many primes $\ell$ the group $\tilde{G}_{\ell,X,K}$ is
commutative. Then $X$ is an abelian variety of CM type over $K$ and
therefore is isogenous over $\bar{K}$ to an abelian variety that is
defined over a finite field.
\end{thm}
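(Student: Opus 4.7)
The plan is to use Corollary \ref{tateFiniteC} (with $B=A$) and Corollary \ref{semisimpleF}(iii), together with the commutativity hypothesis, to identify $\End(A) \otimes \F_\ell$ for infinitely many $\ell$ as the centralizer of a commutative semisimple subalgebra of $\End(A_\ell)$, and then to read off from the resulting Wedderburn decomposition that $\End^0(A)$ contains a commutative semisimple $\Q$-subalgebra of maximum possible $\Q$-dimension $2\dim A$. This is exactly the CM-type condition; the isogeny over $\bar K$ to an abelian variety defined over a finite field then follows from the theorem of Grothendieck cited just before the theorem.

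Set $g = \dim A$ and let $S$ be the infinite set of primes for which $\tilde G_{\ell,A,K}$ is commutative. After discarding finitely many primes, I may assume for every $\ell \in S$ that $A_\ell$ is a semisimple Galois module (Corollary \ref{semisimpleF}(iii)), that $\End(A)\otimes\F_\ell = \End_{\Gal(K)}(A_\ell)$ (Corollary \ref{tateFiniteC}), that $\End(A)\otimes\Z_\ell$ is a maximal order in $\End^0(A)\otimes\Q_\ell$, that $\ell$ is unramified in the center of each simple factor of $\End^0(A)$, and that each such simple factor splits at every place above $\ell$. For such an $\ell$, the image $B_\ell \subset \End(A_\ell)$ of $\F_\ell[\tilde G_{\ell,A,K}]$ is a commutative semisimple $\F_\ell$-subalgebra $\prod_\alpha L_\alpha$, and the isotypic decomposition yields $A_\ell = \bigoplus_\alpha A_\alpha$ with each $A_\alpha$ an $L_\alpha$-vector space of some dimension $r_\alpha$, subject to $\sum_\alpha r_\alpha [L_\alpha : \F_\ell] = 2g$.

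The centralizer of $B_\ell$ in $\End(A_\ell)$ coincides with $\End_{\Gal(K)}(A_\ell) = \End(A)\otimes\F_\ell$ and equals $\bigoplus_\alpha \Mat_{r_\alpha}(L_\alpha)$; its maximal commutative semisimple $\F_\ell$-subalgebra (the diagonal copy $\prod_\alpha L_\alpha^{r_\alpha}$) has $\F_\ell$-dimension $\sum_\alpha r_\alpha [L_\alpha : \F_\ell] = 2g$. On the other hand, writing the $K$-isogeny decomposition $A \sim \prod A_i^{n_i}$ with $A_i$ simple and $D_i = \End^0(A_i)$ a central simple algebra of dimension $d_i^2$ over its center $F_i$ of $\Q$-degree $e_i$, the choices made above force the Wedderburn identification $\End(A)\otimes\F_\ell = \prod_{i,\,v\mid\ell}\Mat_{n_i d_i}(k_v)$ (with $k_v$ the residue field at $v$), whose maximal commutative semisimple $\F_\ell$-subalgebra has dimension $\sum_i n_i d_i e_i$. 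Equating the two values yields $\sum_i n_i d_i e_i = 2g$; comparing with $2g = \sum_i n_i \cdot 2\dim A_i$ and invoking the classical inequality $d_i e_i \leq 2\dim A_i$ (with equality if and only if $A_i$ is of CM type) forces each simple factor $A_i$ to be of CM type, and hence $A$ itself is of CM type over $K$. Grothendieck's theorem then completes the proof.

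The main obstacle is the Wedderburn comparison in the third paragraph: one must verify that the centralizer-theoretic description of the maximal commutative semisimple $\F_\ell$-subalgebra of $\End(A)\otimes\F_\ell$ agrees with its structural description, which requires the careful choice of $\ell$ made in the second paragraph. Each condition imposed there (maximality of the order, unramifiedness, local splitting of the simple factors of $\End^0(A)$) excludes only finitely many primes, so infinitely many $\ell \in S$ survive, and the comparison then amounts to a routine Wedderburn calculation.
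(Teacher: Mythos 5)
Your proof is correct and relies on the same two inputs---Corollary \ref{semisimpleF}(iii) and Corollary \ref{tateFiniteC}---that the paper itself cites; the paper's proof is a one-line deferral to the argument of \cite{ZarhinMZ3}, and the double-commutant/Wedderburn dimension count you carry out (identifying $\End(A)\otimes\F_\ell$ as the commutant of the commutative semisimple algebra $\F_\ell[\tilde G_{\ell,A,K}]$ and then matching the two descriptions of its maximal commutative semisimple subalgebra for a well-chosen $\ell$) is the standard way to extract the CM condition from those ingredients, so the approach is essentially the same.
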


Theorem \ref{CM} may be strengthened as follows.

\begin{thm}
\label{fongswan} Let $X$ be an abelian variety over $K$. Suppose
that for infinitely many primes $\ell$ the group
$\tilde{G}_{\ell,X,K}$ is $\ell$-solvable, i.e., its
Jordan--H\"older factors are either $\ell$-groups or groups whose
order is not divisible by $\ell$. Then there is a finite Galois
extension $K^{\prime}/K$ such that $X \times_{K} K^{\prime}$ is an
abelian variety of CM type over $ K^{\prime}$ and therefore is
isogenous over $\bar{K}$ to an abelian variety that is defined over
a finite field.
\end{thm}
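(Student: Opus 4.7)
The plan is to reduce to Theorem~\ref{CM} by constructing a finite Galois extension $K'/K$ over which the $\ell$-solvability hypothesis becomes genuine commutativity for infinitely many $\ell$. The Fong--Swan theorem (after which the label is named) provides the key bridge: it converts the mod-$\ell$ solvability of $\tilde{G}_\ell$ into characteristic-zero representation-theoretic information about $\tilde{G}_\ell$, to which Jordan's classical theorem applies.

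\emph{Step 1 (Fong--Swan lift).} Let $\mathcal{L}$ be the infinite set of primes $\ell\neq p$ with $\tilde{G}_\ell:=\tilde{G}_{\ell,A,K}$ $\ell$-solvable. Discarding finitely many elements, Corollary~\ref{semisimpleF}(iii) allows us to assume in addition that $A_\ell$ is a semisimple $\Gal(K)$-module for every $\ell\in\mathcal{L}$. For each such $\ell$, Fong--Swan applies: every absolutely irreducible $\bar{\F}_\ell$-constituent of $A_\ell$ lifts to an absolutely irreducible $\bar{\Q}_\ell$-representation of $\tilde{G}_\ell$, and assembling these lifts yields a semisimple representation $\tilde{\rho}_\ell\colon\tilde{G}_\ell\to\GL(2g,\bar{\Q}_\ell)$ (with $g=\dim A$) whose Brauer character coincides with the $\bar{\F}_\ell$-character of $A_\ell$.

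\emph{Step 2 (Jordan, selection of $K'$, and conclusion).} The finite subgroup $\tilde{\rho}_\ell(\tilde{G}_\ell)\subset\GL(2g,\bar{\Q}_\ell)$ has, by Jordan's theorem, an abelian normal subgroup of index at most a constant $J=J(2g)$ independent of $\ell$. Pulling back gives $N_\ell\triangleleft\tilde{G}_\ell$ of index $\le J$ with $\tilde{\rho}_\ell(N_\ell)$ abelian, so $\tilde{\rho}_\ell|_{N_\ell}$ decomposes over $\bar{\Q}_\ell$ as a sum of one-dimensional characters. For $\ell>J$, the index $[\tilde{G}_\ell:N_\ell]$ is coprime to $\ell$, whence $A_\ell|_{N_\ell}$ is again semisimple; matching Brauer characters then forces the image of $N_\ell$ in $\Aut_{\F_\ell}(A_\ell)$ to be abelian. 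Each $N_\ell$ corresponds to a finite Galois extension $K_\ell/K$ of degree $\le J$. Choosing a single finite Galois extension $K'/K$ with $K'\supseteq K_\ell$ for infinitely many $\ell\in\mathcal{L}$ (see the obstacle below), the image $\tilde{G}_{\ell,A\times_K K',K'}$ is abelian for infinitely many $\ell$, and Theorem~\ref{CM} applied to $A\times_K K'$ over $K'$ yields the desired conclusion.

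\emph{Main obstacle.} The principal difficulty is the selection of a single $K'$ from the $\ell$-dependent family $\{K_\ell\}$: although every $K_\ell$ has degree bounded by $J$, a finitely generated positive-characteristic field can admit infinitely many extensions of any given bounded degree, so a naive pigeonhole argument is unavailable. The cleanest remedy is to reinterpret $N_\ell$ (or a suitable enlargement of it) as the preimage of the identity component of the Zariski closure of $\rho_{\ell,A}(\Gal(K))$ in $\GL(V_\ell(A))$, and to invoke a positive-characteristic analogue of Serre's uniformity result asserting that these component groups are cut out by a single finite Galois extension $K'/K$, independent of $\ell$.
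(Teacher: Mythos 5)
Your Steps 1 and 2 follow the paper's strategy exactly: lift the semisimple $\F_\ell[\tilde{G}_\ell]$-module $A_\ell$ to characteristic zero via Fong--Swan, embed $\tilde{G}_\ell$ into $\GL(2g,\C)$ (here you can simplify: the product of the lifts is faithful, so there is no need for the Brauer-character matching or the semisimplicity of the restriction to $N_\ell$---the bounded-index abelian normal subgroup of the image pulls back isomorphically to one of $\tilde{G}_\ell$), and invoke Jordan's theorem to produce $H_\ell\triangleleft\tilde{G}_\ell$ abelian of index dividing a constant $N=N(2g)$. You also correctly identify the crux: one must descend the $\ell$-dependent family $\{K_\ell\}$ to a single finite Galois extension $K'$.

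However, your proposed remedy is a genuine gap. You invoke ``a positive-characteristic analogue of Serre's uniformity result asserting that these component groups are cut out by a single finite Galois extension $K'/K$, independent of $\ell$.'' No such result is available off the shelf in this generality (Serre's open-image / independence-of-$\ell$ statements are over number fields, and even there the relation between the Jordan subgroup $H_\ell$ and the identity component of the Zariski closure of $G_{\ell,A,K}$ would need to be made precise); and you give no argument for it. Without it, Step 2 does not close.

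The paper's actual resolution of this obstacle is geometric and self-contained. First, after a preliminary base change to $K(A_n)$ for some $n\ge 3$ prime to $p$, Raynaud's criterion (\cite[Prop. 4.7]{GrothendieckN}) guarantees that $A$ has semistable reduction at every discrete valuation of $K$. Lemma \ref{ramIndex}, which is proved using the unipotence of the inertia action on $V_\ell(A)$ for semiabelian reduction (\cite[Prop. 3.5]{GrothendieckN}), shows that the inertia groups of $K(A_\ell)/K$ at every place are commutative $\ell$-groups; since $[K_\ell:K]$ divides $N$ and $\ell\nmid N$ (after deleting prime divisors of $N$ from $P$), this forces $K_\ell/K$ to be \emph{unramified at every discrete valuation of $K$}. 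One then picks a normal projective model $S/k$ of $K$, a smooth open dense $V\subset S$ with $\codim(S\setminus V)\ge 2$, and applies Zariski--Nagata purity to see that each $K_\ell$ corresponds to a connected \'etale Galois cover of $V$ of degree dividing $N$. Because $\pi_1(V)$ is topologically finitely generated (\cite[Ch.~XII, Sect.~1]{MB}), it has only finitely many open normal subgroups of index dividing $N$; their intersection yields a single finite Galois extension $E/K$ containing all $K_\ell$, after which $\tilde{G}_{\ell,A\times_K E,E}$ is abelian for all $\ell\in P$ and Theorem \ref{CM} finishes the proof. Your outline is missing this entire mechanism (semistable reduction, Lemma \ref{ramIndex}, purity, finite generation of $\pi_1$), which is precisely why the theorem is nontrivial.
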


Theorem \ref{fongswan} combined with the celebrated theorem of
Feit-Thompson (about solvability of groups of odd order) implies
readily the following statement.

\begin{cor}
Let $X$ be an abelian variety of positive dimension over $K$ that is
not isogenous over $\bar{K}$ to an abelian variety that is defined
over a finite field. Then for all but finitely many primes $\ell$
the group $\tilde{G}_{\ell,X,K}$ is not solvable and its order is
divisible by $2\ell$.
\end{cor}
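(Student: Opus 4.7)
The plan is to prove the contrapositive in pieces, using Theorem \ref{fongswan} as a black box and invoking the Feit--Thompson odd-order theorem only at the very end. Set $G_\ell := \tilde{G}_{\ell,A,K}$ for brevity, and assume $A$ is not isogenous over $\bar{K}$ to an abelian variety defined over a finite field. By the contrapositive of Theorem \ref{fongswan}, the set
\[
\mathcal{L} := \{\ell \text{ prime} : G_\ell \text{ is } \ell\text{-solvable}\}
\]
is finite. So for all but finitely many primes $\ell$, the group $G_\ell$ fails to be $\ell$-solvable; it remains to show that this failure forces $G_\ell$ to be non-solvable and to have order divisible by both $2$ and $\ell$.

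First I would observe the elementary implication that every solvable group is $\ell$-solvable: indeed, the Jordan--H\"older factors of a solvable group are cyclic of prime order, and each such factor is trivially either an $\ell$-group (if the prime is $\ell$) or of order prime to $\ell$ (if the prime is different from $\ell$). Hence if $G_\ell$ is not $\ell$-solvable, it is not solvable. This already handles the non-solvability assertion for $\ell \notin \mathcal{L}$.

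Next I would treat divisibility by $\ell$. If $\ell \nmid |G_\ell|$, then none of the Jordan--H\"older factors of $G_\ell$ has order divisible by $\ell$, so every such factor is a group of order prime to $\ell$, and in particular $G_\ell$ is $\ell$-solvable. Thus for $\ell \notin \mathcal{L}$ we must have $\ell \mid |G_\ell|$. Finally, for divisibility by $2$, I would invoke Feit--Thompson: if $|G_\ell|$ were odd, then $G_\ell$ would be solvable, hence $\ell$-solvable by the observation above, contradicting $\ell \notin \mathcal{L}$. So $2 \mid |G_\ell|$ as well, and combining the two divisibilities gives $2\ell \mid |G_\ell|$ for all but finitely many $\ell$.

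There is no real obstacle beyond assembling these three elementary observations; the substantive content is hidden inside Theorem \ref{fongswan} (which packages the $\ell$-solvable case via the Fong--Swan-type reduction to CM) and the Feit--Thompson theorem, both of which are available to us by hypothesis.
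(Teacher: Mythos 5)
Your proof is correct and follows exactly the route the paper gestures at: apply Theorem \ref{fongswan} in contrapositive form and then use Feit--Thompson together with elementary facts about Jordan--H\"older factors. The only point worth flagging is that concluding $2\ell \mid |G_\ell|$ from $2 \mid |G_\ell|$ and $\ell \mid |G_\ell|$ requires $\ell \neq 2$; this is harmless since $\ell = 2$ is just one more prime to exclude, but it deserves a word.
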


\begin{rem}
See \cite{ZarhinMRL2, ZarhinBSMF, ZarhinSb} for  plenty of explicit
examples of abelian varieties in characteristic $p$ without CM.

 Theorem \ref{CM} was proven in \cite{ZarhinMZ3} under an
additional assumption that $p>2$. Theorem \ref{fongswan}  was stated
without proof in \cite{ZarhinYar} under an additional assumption
that $K$ is global and $p>2$.
\end{rem}

In order to state  another (partial) strenghening of Theorem \ref{CM}, we need to introduce the following notation.
If $\mathcal{A}$ a commutative group then we write $\TORS(\mathcal{A})$ for its subgroup of all periodic elements and
$\TORS(\mathcal{A})(\non-p)$ that consists of all elements of $\TORS(\mathcal{A})$, whose order is prime to $p$.

\begin{thm}
\label{maxab} Let $K^{ab}\subset \bar{K}_s$ be the maximal abelian
extension of $K$. Let $X$ be a simple abelian variety over $K$. If
$\TORS(X(K^{ab}))(\non-p)$ is infinite then $X$ is an abelian
variety of CM-type over $K$ and therefore is isogenous over
$\bar{K}$ to an abelian variety that is defined over a finite field.
\end{thm}

\begin{rem}
In characteristic zero an analogue of Theorem \ref{maxab} was proven
in \cite[Th. 1.5]{ZarhinDUKE}.
\end{rem}

Theorem \ref{maxab} implies readily the following statement.
(Compare with \cite[Corollary on p. 132]{ZarhinDUKE}.)

\begin{cor}
\label{maxabNS} Let $K^{ab}\subset \bar{K}_s$ be the maximal abelian
extension of $K$. Let $X$ be an abelian variety of positive
dimension over $K$. Let $X_1, \dots , X_r$ be simple abelian
varieties over $K$ such that the product $\prod_{i=1}^r X_i$ is
$K$-isogenous to $X$. Then $\TORS(X(K^{ab}))(\non-p)$ is finite if
and only if all the groups $\TORS(X_i(K^{ab}))(\non-p)$ are finite,
i.e., all $X_i$ are not of CM-type over $K$ ($1\le i\le r)$.
\end{cor}

Now we discuss the torsion of abelian varieties in infinite Galois
extensions of $K$ with finite ``field of constants".

\begin{thm}
\label{finiteConst} Let $X$ be an abelian variety of positive
dimension over $K$ such that the center of $\End^0(X)$ is a direct
sum of totally real number fields. Let $K^{\prime}\subset \bar{K}_s$
be an infinite Galois extension of $K$. Let $\F^{\prime}$ be the
algebraic closure of $\F_p$ in $K^{\prime}$ and suppose that
$\F^{\prime}$ is a finite field. Then $\TORS(X(K^{\prime}))(\non-p)$
is finite.
\end{thm}

Theorem \ref{finiteConst} is an immediate corollary of the
conjunction of following two assertions.

\begin{thm}
\label{ConstEll} Let $X$ be an abelian variety of positive dimension
over $K$ such that the center of $\End^0(X)$ is a direct sum of
totally real number fields. Let $K^{\prime}\subset \bar{K}_s$ be an
infinite Galois extension of $K$. If $\ell \ne p$ is a prime such
that the $\ell$-primary component of $\TORS(X(K^{\prime}))$ is
infinite then $K^{\prime}$ contains all $\ell$-power roots of unity.
In particular, the algebraic closure of $\F_p$ in $K^{\prime}$ is
infinite.
\end{thm}

\begin{thm}
\label{ConstRoot} Let $X$ be an abelian variety of positive
dimension over $K$ such that the center of $\End^0(X)$ is a direct
sum of totally real number fields. Let us choose a polarization
$\lambda: X \to X^t$ that is defined over $K$. Let $\ell \ne p$ be a
prime that enjoys the following properties:

\begin{itemize}
\item[(i)]
$\ell$ is odd and prime to $\deg(\lambda)$;
\item[(ii)]
The $\Gal(K)$-module $X_{\ell}$ is semisimple and
$$\End_{\Gal(K)}(X_{\ell})=\End(X)\otimes \Z/\ell\Z.$$
\item[(iii)] If $C$ is the center of $\End(X)$ then $C/\ell C$ is the
center of $\End(X)/\ell \End(X)$.
\end{itemize}
If the $\ell$-primary component of $\TORS(X(K^{\prime}))$ does not
vanish then $K^{\prime}$ contains a primitive $\ell$th root of
unity. In particular, if $\F^{\prime}$ is the algebraic closure of
$\F_p$ in $K^{\prime}$ then its order  is strictly greater than
$\ell$.
\end{thm}

\begin{rem}
Let $S$ be the set of primes $\ell$ that do not enjoy either
property (i) or property (ii). Then $S$ is finite.
\end{rem}

The paper is organized as follows. In Section \ref{help} we discuss
isogenies of abelian varieties and their kernels (viewed as finite
Galois modules). One of the goals of our approach is to stress the
role of analogues of Tate's finiteness conjecture for isogeny
classes of abelian varieties.  In Section \ref{mainProof} we prove
all the main results except Theorem
\ref{fongswan}, which will be proven in Section \ref{lifting}.
Section \ref{conclude} contains additional references to results
that may be extended to characteristic $2$ case.

{\bf Acknowledgements}. I am grateful to Alexey Parshin, Chad Schoen
and Doug Ulmer for their interest in this paper. My special thanks
go to the referees, whose comments helped to improve the exposition.

The final version of this paper was prepared during my stay at Max-Planck-Institut f\"ur Mathematik (Bonn) in September 2013: I am grateful to the MPI for the hospitality and support.

\section{Isogenies and finite Galois modules}
\label{help} We write $\P$ for the set of all primes. Let $K$ be a
field. Let $P\subset \P$ be a nonempty set of primes that does {\sl
not} contain $\fchar(K)$. If $X$ and $Y$ are abelian varieties over
$K$ then a $K$-isogeny $X \to Y$ is called a $P$-isogeny if all
prime divisors of its degree are elements of $P$. For example, if
$P$ is a singleton $\{\ell\}$ then a $P$-isogeny is nothing else but
an $\ell$-power isogeny. We say that $X$ and $Y$ are $P$-isogenous
over $K$ if there is a $P$-isogeny $X \to Y$ that is defined over
$K$. The property to be $P$-isogenous is an equivalence relation.
Indeed, one has only to check that there exists a $P$-isogeny $v:Y
\to X$ that is defined over $K$. Indeed, thanks to Lagrange theorem,
$\ker(u) \subset X_n$ where $$n: =\deg(u)=\#(\ker(u)).$$ It follows
that there is a $K$-isogeny $v: Y \to X$ such that the composition
$vu:X \to Y \to X$ coincides with multiplication by $n$ in $X$. This
implies that
$$n^{2\dim(X)}=\deg(vu)=\deg(v)\deg(u).$$
Since $u$ is a $P$-isogeny, all prime divisors of $n$ belong to $P$.
This implies that all prime divisors of $\deg(v)$ also belong to
$P$, i.e., $v$ is a $P$-isogeny and we are done.

Let $X^t$ and $Y^t$ be the dual abelian varieties (over $K$) of $X$
and $Y$ respectively and
$$u^t: Y^t \to X^t, \ v^t: X^t \to Y^t$$
be the $K$-isogenies that are duals of $u$ and $v$ respectively.
Since
$$\deg(u^t)=\deg(u). \deg(v^t)=\deg(v),$$
$X^t$ and $Y^t$ are also $P$-isogenous over $K$. (Warning: $X$ and
$X^t$ are {\sl not} necessarily $P$-isogenous!) This implies that if
$X$ and $Y$ are $P$-isogenous over $K$ then $(X \times X^t)^4$ and
$(Y \times Y^t)^4$ are also $P$-isogenous over $K$.

We write $\IsP_P(X,K)$ for the set of isomorphism classes of abelian
varieties $Y$ over $K$ that are $P$-isogenous to $X$ over $K$. We
write $\IsP_P(X,K,1)$ for the subset of $\Is_P(X,K)$ that consists
of all isomorphism classes of $Y$ with principal polarization over
$K$. For example, if $P$ is $\P \setminus \{\fchar(K)\}$ then
$$\IsP_P(X,K)=\Is(X,K), \ \IsP_P(X,K,1)=\Is(X,K,1).$$

Now Theorem \ref{finiteisogp} becomes an immediate corollary of
Theorem \ref{finiteTate} and the following statement.

\begin{thm}
\label{finiteisog} Let $X$ be an  abelian variety   over a field
$K$. Suppose that  the set $\IsP_P((X\times X^t)^4,K,1)$ is finite.
Then the set $\IsP_P(X,K)$ is also finite.
\end{thm}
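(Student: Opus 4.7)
The proof proposal would invoke Zarhin's trick. For every abelian variety $B$ over any field $K$, the product $(B\times B^t)^4$ admits a canonical principal polarization defined over $K$, valid in arbitrary characteristic (including $p=2$); I would cite this classical fact without reproof. Combined with the observation already established in this section that the $P$-isogeny relation over $K$ is preserved under passage to duals, under products, and under powers, this makes the assignment
$$\Phi : \Is_P(A, K) \longrightarrow \Is_P\bigl((A\times A^t)^4, K, 1\bigr), \qquad [B]\longmapsto \bigl[(B\times B^t)^4\bigr],$$
a well-defined map. By hypothesis the target is finite, so the whole theorem reduces to showing that every fibre of $\Phi$ is finite.

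Fix an isomorphism class $[Z]$ in the image of $\Phi$. Any $[B]\in\Phi^{-1}([Z])$ comes equipped, via a chosen isomorphism $(B\times B^t)^4\cong Z$, with a realisation of $B$ as a direct $K$-factor of $Z$. The finiteness of the fibre therefore reduces to the general statement that a fixed abelian variety $Z$ over $K$ has only finitely many direct $K$-factors up to $K$-isomorphism. This is a classical consequence of two ingredients: first, $\End(Z)$ is a finitely generated $\Z$-module that is an order in the finite-dimensional semisimple $\Q$-algebra $\End^0(Z)$; second, by the Jordan--Zassenhaus theorem, an order in a finite-dimensional semisimple $\Q$-algebra admits only finitely many isomorphism classes of finitely generated projective right modules of any bounded $\Z$-rank. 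Since direct $K$-factors of $Z$ correspond bijectively to isomorphism classes of direct summands of $\End(Z)$ viewed as a right module over itself (each arising from an idempotent in $\End(Z)$), their number is finite.

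Combining the two steps, $\Is_P(A,K)$ maps to a finite set with finite fibres and is therefore finite. The main obstacle, as I see it, is not the formal construction of $\Phi$, which is essentially immediate once one accepts the existence of Zarhin's principal polarization on $(B\times B^t)^4$, but rather the finiteness of direct factors of $Z$: one must verify that the Jordan--Zassenhaus input applies uniformly over an arbitrary base field $K$, with no restriction on $\fchar(K)$, so that the argument truly covers the $p=2$ case motivating the paper.
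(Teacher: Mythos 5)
Your proof is correct and reaches the same conclusion by a noticeably different final step. Both you and the paper begin identically: Zarhin's trick gives a principal polarization on $(B\times B^t)^4$ over $K$ in any characteristic, the $P$-isogeny relation passes to duals and products, and the resulting map $[B]\mapsto[(B\times B^t)^4]$ lands in the finite set $\Is_P((A\times A^t)^4,K,1)$. Where you diverge is in bounding the ``fibre'': the paper remarks only that $B$ is $K$-isomorphic to an abelian \emph{subvariety} of $(B\times B^t)^4$ and then invokes the Lenstra--Oort--Zarhin theorem \cite{LOZ} that a fixed abelian variety has only finitely many abelian subvarieties up to isomorphism. You instead exploit the stronger geometric fact that $B$ is actually a direct $K$-\emph{factor} of $(B\times B^t)^4$, and prove finiteness of direct factors in a self-contained way: direct factors of $Z$ up to $K$-isomorphism correspond bijectively to isomorphism classes of right ideals $e\,\End(Z)$ with $e^2=e$ (the inverse direction uses that an isomorphism of right $\End(Z)$-modules $e\,\End(Z)\to e'\,\End(Z)$ is given by left multiplication by some $a\in e'\End(Z)e$ with a two-sided inverse $b\in e\End(Z)e'$, which then restricts to a $K$-isomorphism $e(Z)\to e'(Z)$), and Jordan--Zassenhaus for the order $\End(Z)$ in the semisimple $\Q$-algebra $\End^0(Z)$ bounds these. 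Your route replaces a geometric reference by a purely algebraic one and is arguably more elementary, since it never needs the finiteness of \emph{all} abelian subvarieties; the paper's version is shorter because it cites \cite{LOZ} wholesale. As you note, neither ingredient depends on $\fchar(K)$, so both cover $p=2$; the one point to spell out carefully, which you flag, is the bijection between isomorphism classes of direct factors and of idempotent-generated right ideals.
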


\begin{proof}
(i) Let us fix  an abelian variety $X$ be over $K$. Let $Y$ be an
abelian variety over $K$ that is $P$-isogenous to $X$ over $K$. As
we have seen, $(Y \times Y^t)^4$ is $P$-isogenous to $(X \times
X^t)^4$ over $K$. Recall \cite{ZarhinMZ2,ZarhinInv,MB}
 (see also
\cite[Sect. 7]{ZarhinG}) that $(Y \times Y^t)^4$ admits a principal
polarization over $K$ \footnote{In \cite[Ch. IX, Sect. 1]{MB}
Deligne's proof is given.}. Since the set $\Is_P((Y \times
Y^t)^4,K,1)$ is finite, the set of $K$-isomorphism classes of all
$(Y \times Y^t)^4$ (with fixed $X$) is finite. On the other hand,
each $Y$ is isomorphic to a $K$-abelian subvariety of $(Y \times
Y^t)^4$ over $K$. But the set of isomorphism classes of abelian
subvarieties of a given abelian variety is finite \cite{LOZ}. This
implies that the set of $K$-isomorphism classes of all $Y$'s is
finite.
\end{proof}

\begin{cor}
\label{almostss}
 Let $X$ be an abelian variety of positive dimension over a
field $K$. Suppose that  the set $\IsP_P((X\times X^t)^4,K,1)$ is
finite.

Then there exists a positive integer $r=r(X)$ that is not divisible
by $\fchar(K)$ and enjoys the following properties.

\begin{itemize}
\item[(i)]
If $Y$ is an abelian variety over $K$ that is $P$-isogenous to $X$
over $K$ then there exist a $P$-isogeny $\beta: X \to Y$ over $K$
with $\ker(\beta) \subset X_r$.
\item[(ii)]
If $n$ is a positive integer, all whose prime divisors lie in $P$
 and $W$ is a Galois submodule
in $X_n$, then there exists $u \in \End(X)$ such that
$$r W \subset u(X_n)\subset W.$$
\end{itemize}
\end{cor}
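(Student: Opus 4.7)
My plan is to use Theorem \ref{finiteisog} as a black box to produce a finite list of representatives of $\Is_P(X,K)$, let $r$ be the $\mathrm{lcm}$ of the degrees of chosen isogenies to these representatives, and then build the endomorphism $u$ required in (ii) as a two-step composition whose degrees balance so as to give the advertised sandwich $rW\subset u(X_n)\subset W$.

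First, I would invoke Theorem \ref{finiteisog} (whose hypothesis is exactly the finiteness of $\Is_P((X\times X^t)^4,K,1)$ assumed here) to conclude that $\Is_P(X,K)$ is finite. Pick representatives $Y_1,\dots,Y_s$ and $P$-isogenies $\alpha_i\colon X\to Y_i$ of degree $n_i$. Since each $\alpha_i$ is a $P$-isogeny and $\fchar(K)\notin P$, none of the $n_i$ is divisible by $\fchar(K)$; set $r:=\mathrm{lcm}(n_1,\dots,n_s)$, so $\fchar(K)\nmid r$, and by Lagrange $\ker\alpha_i\subset X_{n_i}\subset X_r$. Assertion (i) then follows by composition: any $Y\in\Is_P(X,K)$ is $K$-isomorphic to some $Y_i$, and pulling $\alpha_i$ through this isomorphism yields the required $\beta\colon X\to Y$ with $\ker\beta\subset X_r$.

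For (ii), given a Galois submodule $W\subset X_n$, I would let $\pi\colon X\to Y:=X/W$ be the quotient isogeny. Since $|\ker\pi|=|W|$ divides $n^{2\dim X}$ and every prime divisor of $n$ lies in $P$, $\pi$ is a $P$-isogeny, so by (i) there is a $P$-isogeny $\beta\colon X\to Y$ with $\ker\beta\subset X_r$; let $\gamma\colon Y\to X$ be the complementary isogeny with $\gamma\circ\beta=[r]_X$ and $\beta\circ\gamma=[r]_Y$. Because $W\subset X_n=\ker[n]_X$, multiplication-by-$n$ on $X$ factors through $\pi$ as $\hat\pi\circ\pi=[n]_X$ for a unique isogeny $\hat\pi\colon Y\to X$, which automatically also satisfies $\pi\circ\hat\pi=[n]_Y$. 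The candidate endomorphism is
$$u:=\hat\pi\circ\beta\in\End(X).$$
For $x\in X_n$, $\pi(u(x))=[n]_Y\beta(x)=\beta([n]_X x)=0$, so $u(X_n)\subset\ker\pi\cap X_n=W$. Conversely, each $w\in W$ equals $\hat\pi(y)$ for some $y\in Y_n$ (take $y=\pi(x)$ for any $x$ with $[n]_X x=w$), and then $\gamma(y)\in X_n$ with $u(\gamma(y))=\hat\pi(\beta(\gamma(y)))=\hat\pi([r]_Y y)=r\hat\pi(y)=rw$, so $rW\subset u(X_n)$.

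The step I expect to be the main obstacle is identifying the correct $u$: several natural candidates (for instance $\gamma\circ\pi$) fail to map $X_n$ into $W$. What makes the choice $u=\hat\pi\circ\beta$ work is the observation that $\hat\pi(Y_n)$ equals $W$ \emph{exactly} (not merely is contained in $W$), which is precisely what allows every element of $rW$ to be hit after passing through the auxiliary $\beta$ provided by part (i); once this is noticed, both inclusions reduce to the two identities $\hat\pi\circ\pi=[n]_X$ and $\beta\circ\gamma=[r]_Y$.
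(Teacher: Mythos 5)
Your proof is correct and follows the same route as the paper: for (i) you invoke Theorem \ref{finiteisog} to get a finite set of representatives, take the number built from the degrees of chosen $P$-isogenies (the paper uses the product $\prod m_i$ rather than the lcm, an immaterial difference), and conclude by Lagrange; for (ii) you form the quotient $\pi\colon X\to X/W$, note it is a $P$-isogeny, and then construct the endomorphism by composing the isogeny $\beta$ from (i) with the isogeny $\hat\pi$ through which $[n]_X$ factors. The paper itself only states the setup of (ii) and then refers to \cite[Sect.\ 8]{ZarhinG} for the remaining computation, but the argument you supply (using $\hat\pi\circ\pi=[n]_X$, $\beta\circ\gamma=[r]_Y$, and $u=\hat\pi\circ\beta$ to get both $u(X_n)\subset W$ and $rW\subset u(X_n)$) is precisely that standard argument, so this is essentially the paper's proof with the outsourced step written out.
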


\begin{proof}
By Theorem \ref{finiteisog}, there are finitely many $K$-abelian
varieties $Y_1, \dots, Y_d$ that are $P$-isogenous to $X$ over $K$
and such that every $K$-abelian variety $Y$  that is
$P$-isogenous to $X$ over $K$ is $K$-isomorphic to one of $Y_j$. For
each $Y_i$ pick a $P$-isogeny $\beta_i: X \to Y_i$ that is defined
over $K$. Clearly, $\ker(\beta_i)\subset X_{m_i}$ where
$m_i=\deg(\beta_i)$. Let us put $r=\prod_{i=1}^{d} m_i$. Clearly,
for all $Y_i$
$$\ker(\beta_i)\subset X_{m_i}\subset X_r.$$
This implies that for every $K$-abelian variety $Y$  that is
$P$-isogenous to $X$ over $K$ there exists a $P$-isogeny $\beta:
X\to Y$ over $K$ whose kernel lies in $X_{r}$. This proves (i),
since every prime divisor of $r$ is a prime divisor of one of
$m_i=\deg(\beta_i)$ and therefore lies in $P$.

Proof of (ii) The quotient $Y=X/W$ is an abelian variety over $K$.
The canonical map $\pi: X\to X/W=Y$ is a $P$-isogeny over $K$,
because $\deg(\pi)=\#(W)$ divides $\#(X_n)=n^{2\dim(X)}$. This
implies that $Y$ is $P$-isogenous to $X$ over $K$.

The rest of the proof goes literally (with the same notation) as in
\cite[Sect. 8, pp. 331--332]{ZarhinG} provided one replaces the
reference to \cite[Cor. 3.5(i)]{ZarhinG} by the already proven case
(i) of Corollary \ref{almostss}. (In \cite{ZarhinG}, $n_X: X \to X$
and $n_Y:Y \to Y$ denote the multiplication by $n$ in $X$ and $Y$
respectively.)
\end{proof}

\begin{thm}
\label{ssL} Suppose that $P$ is infinite. Let $X$ be an abelian
variety of positive dimension over a field $K$. Suppose that  the
set $\IsP_P((X\times X^t)^4,K,1)$ is finite.

Then for all but finitely many primes $\ell\in P$ the Galois module
$X_{\ell}$ enjoys the following properties.

If $W$ is a Galois submodule in $X_{\ell}$ then there exists
$\tilde{u} \in \End(X)\otimes \Z/\ell Z$ such that
$\tilde{u}^2=\tilde{u}$ and $\tilde{u}(X_{\ell})=W$. In particular,
the Galois module $X_n$ splits into a direct sum
$$X_{\ell}=\tilde{u}(X_{\ell})\oplus (1-\tilde{u})(X_{\ell})=W\oplus
(1-\tilde{u})(X_{\ell})$$ of its Galois submodules $W$ and
$(1-\tilde{u})(X_{\ell})$.
\end{thm}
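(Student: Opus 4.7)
The plan is to combine Corollary~\ref{almostss}(ii) with the fact that $R_\ell:=\End(X)\otimes\F_\ell$ is a semisimple $\F_\ell$-algebra for all but finitely many primes $\ell$, and to extract the required idempotent from a principal \emph{right} ideal of $R_\ell$. To set up the cofinite set of primes, recall that $\End(X)$ is a $\Z$-order in the finite-dimensional semisimple $\Q$-algebra $\End^0(X)$. After excluding the finitely many primes dividing the reduced discriminant of $\End^0(X)$ together with those dividing the index of $\End(X)$ in some maximal $\Z$-order containing it, $R_\ell$ becomes a product of matrix algebras over finite fields and is therefore semisimple. Removing in addition the primes $\ell\notin P$ and $\ell\mid r$, where $r=r(X)$ is the integer given by Corollary~\ref{almostss}, still leaves all but finitely many primes; the argument that follows applies to every such $\ell$.

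Fix such an $\ell$ and a Galois submodule $W\subset X_\ell$. By Corollary~\ref{almostss}(ii) with $n=\ell$ there is $u\in\End(X)$ with $rW\subset u(X_\ell)\subset W$; since $\gcd(\ell,r)=1$, this collapses to $u(X_\ell)=W$. View $u$ as an element of $R_\ell$ and form the principal right ideal $uR_\ell\subset R_\ell$. Semisimplicity of $R_\ell$ yields a decomposition $R_\ell=uR_\ell\oplus J$ of right $R_\ell$-modules; writing $1=f+g$ with $f\in uR_\ell$ and $g\in J$, and using $uR_\ell\cap J=0$, one verifies in the standard way that $f^2=f$ and $fR_\ell=uR_\ell$.

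It remains to check that $f(X_\ell)=W$, which is the whole point. Since $u\in fR_\ell$, write $u=fs$ with $s\in R_\ell$; then $W=u(X_\ell)=f(s(X_\ell))\subset f(X_\ell)$. Since $f\in uR_\ell$, write $f=ut$ with $t\in R_\ell$; then $f(X_\ell)=u(t(X_\ell))\subset u(X_\ell)=W$. Hence $f(X_\ell)=W$, and setting $\tilde u:=f$ produces the required idempotent in $\End(X)\otimes\Z/\ell\Z$; the decomposition $X_\ell=W\oplus(1-\tilde u)(X_\ell)$ is then formal, both summands being Galois stable because $\tilde u$ belongs to the centralizer of $\Gal(K)$ in $\End_{\F_\ell}(X_\ell)$. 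I expect the main technical nuisance to be isolating the finite set of bad primes for the semisimplicity of $R_\ell$; the conceptual point worth flagging is that using the \emph{right} ideal $uR_\ell$ (rather than the left ideal $R_\ell u$) is what forces the resulting idempotent to have image exactly $W$ --- the left-ideal version would only give an idempotent with image Galois-isomorphic to $W$ but not equal to it, as one already sees in the toy case $R_\ell=\M_2(\F_\ell)$ acting on $\F_\ell^2$.
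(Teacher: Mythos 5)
Your argument is correct and follows essentially the same route as the paper: invoke semisimplicity of $\End(X)\otimes\Z/\ell\Z$ for all but finitely many $\ell$, use Corollary~\ref{almostss}(ii) with $n=\ell$ to get $u\in\End(X)$ with $u(X_\ell)=W$, pass to the \emph{right} ideal $u R_\ell$, and extract an idempotent generator. The paper cites a lemma for the semisimplicity step and states the final equality $\tilde u(X_\ell)=W$ without the two-line verification, but your filled-in details (including the correct observation that the right ideal, not the left, is what pins down the image rather than merely the isomorphism class of the image) match the intended argument.
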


Theorem \ref{ssL} implies immediately the following assertion.

\begin{cor}
\label{ssLcor} Suppose that $P$ is infinite. Let $X$ be an abelian
variety of positive dimension over a field $K$. Suppose that  the
set $\IsP_P((X\times X^t)^4,K,1)$ is finite.

Then for all but finitely many primes $\ell\in P$ the Galois module
$X_{\ell}$ is semisimple.
\end{cor}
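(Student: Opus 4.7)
The plan is to deduce Corollary \ref{ssLcor} directly from Theorem \ref{ssL} by observing that the conclusion of the theorem already produces Galois-equivariant complements of arbitrary Galois submodules of $X_\ell$, which is exactly the defining property of a semisimple module.

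More concretely, I would proceed as follows. Let $\ell \in P$ be one of the cofinitely many primes for which Theorem \ref{ssL} applies to $X$. Let $W \subset X_\ell$ be any Galois submodule. By Theorem \ref{ssL} there exists an idempotent $\tilde{u} \in \End(X)\otimes \Z/\ell\Z$ with $\tilde{u}(X_\ell) = W$ and
$$X_\ell = \tilde{u}(X_\ell) \oplus (1-\tilde{u})(X_\ell) = W \oplus (1-\tilde{u})(X_\ell).$$
Since $\tilde{u}$ lies in $\End(X)\otimes \Z/\ell\Z \subset \End_{\Gal(K)}(X_\ell)$, both $\tilde{u}$ and $1-\tilde{u}$ commute with the Galois action, so the summand $(1-\tilde{u})(X_\ell)$ is itself a Galois submodule of $X_\ell$ and provides a Galois-stable complement to $W$.

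Thus every Galois submodule of $X_\ell$ admits a Galois-stable direct complement. By the standard characterization of semisimplicity (a finite-length module is semisimple if and only if every submodule is a direct summand), the $\Gal(K)$-module $X_\ell$ is semisimple. Since this conclusion holds for all but finitely many $\ell \in P$, the corollary follows.

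There is really no obstacle here: the substantive content, namely the existence of the idempotent $\tilde{u}$, is already packaged into Theorem \ref{ssL}, and the corollary is just the translation of that statement into the language of semisimplicity. The only minor point worth mentioning is that one must remember that $\End(X)\otimes \Z/\ell\Z$ sits inside the centralizer $\End_{\Gal(K)}(X_\ell)$, which has been recorded in the introduction.
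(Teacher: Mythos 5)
Your proposal is correct and matches the paper's intent exactly: the paper itself states that Corollary \ref{ssLcor} follows ``immediately'' from Theorem \ref{ssL}, and your write-up just makes explicit the standard fact that a module in which every submodule is a direct summand is semisimple, using that $\tilde{u}$ and $1-\tilde{u}$ are Galois-equivariant. No further comment is needed.
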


\begin{proof}[Proof of Theorem \ref{ssL}] It is well known that for
all but finitely many primes $\ell$ the finite-dimensional
$\F_{\ell}$-algebra $\End(X)\otimes\Z/\ell\Z$ is semisimple. (See,
e.g., \cite[Lemma 3.2]{ZarhinMZ2}.) Let $r$ be as in Corollary
\ref{almostss}. Now let $\ell \in P$ be a prime that does not divide
$r$ and such that $\End(X)\otimes\Z/\ell\Z$ is semisimple. Let $W$
be a Galois submodule in $X_{\ell}$. By Corollary \ref{almostss},
there exists $u \in \End(X)$ such that
$$rW \subset u(X_{\ell})\subset W.$$
Since $\ell$ does not divide $r$, we have $rW=W$ and therefore
$u(X_{\ell})=W$. Let $u_{\ell}$ be the image of $u$ in
$\End(X)\otimes\Z/\ell\Z$. Clearly,
$$u_{\ell}(X_{\ell})=u(X_{\ell})=W.$$
Let $I$ be the right ideal in semisimple $\End(X)\otimes\Z/\ell\Z$
generated by $u_{\ell}$. The semisimplicity implies that there
exists an {\sl idempotent} $\tilde{u}$ that generates $I$. It
follows that
$$W=u_{\ell}(X_{\ell})=\tilde{u}(X_{\ell}).$$
\end{proof}

 We will need the following lemma
\cite[Lemma 9.2 on p. 333]{ZarhinG}.

\begin{lem}
\label{lemma92} Let $Y$ be an abelian variety of positive dimension
over an arbitrary field $K$. Then there exists a positive integer
$h=h(Y,K)$ that enjoys the following properties.

If $n$ is a positive integer that is not divisible by $\fchar(K)$,
$u,v \in \End(Y)$ are endomorphisms of $Y$ such that
$$\{\ker(u)\bigcap Y_n\}\subset \{\ker(v)\bigcap Y_n\}$$
then there exists a $K$-isogeny $w: Y \to Y$ such that
$$hv-wu \in n \cdot \End(Y).$$
In particular, the images of $hv$ and $wu$ in
$$\End(Y)\otimes \Z/n\Z\subset\End_{\Gal(K)}(Y_n)\subset \End_{\Z/n\Z}(Y_n)$$
coincide.
\end{lem}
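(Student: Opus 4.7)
The plan is to reduce the problem, via Poincar\'e's complete reducibility over $K$, to the isotypical case, and then solve it by a Smith-normal-form-type argument over orders in division algebras, absorbing all auxiliary constants into $h$.

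First, I fix a $K$-polarization on $Y$ and invoke Poincar\'e's theorem over $K$ to produce a $K$-isogeny $\phi\colon Y \to Y' := \prod_{i=1}^s Y_i^{m_i}$, where the $Y_i$ are pairwise non-$K$-isogenous $K$-simple abelian varieties. Let $c = \deg\phi$ and $\tilde\phi\colon Y' \to Y$ satisfy $\tilde\phi\phi = c\cdot\mathrm{id}_Y$ and $\phi\tilde\phi = c\cdot\mathrm{id}_{Y'}$. Conjugation by $\phi$ identifies $\End^0(Y)$ with $\End^0(Y') = \prod_i M_{m_i}(D_i)$ where $D_i := \End^0(Y_i)$ is a division algebra, and $\End(Y)$ with a subring of finite index (depending only on $c$, hence on $Y$ and $K$) in $\prod_i M_{m_i}(\End(Y_i))$. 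Since the hypothesis and conclusion of the lemma decompose block-diagonally across this product, the problem reduces to the isotypical case $Y = Y_0^m$ with $Y_0$ $K$-simple and $\End(Y) = M_m(\Oc)$, where $\Oc = \End(Y_0)$ is an order in the division algebra $D := \End^0(Y_0)$.

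Next, fix a maximal order $\Oc^{\max}\supset\Oc$ in $D$; its index $b := [\Oc^{\max}:\Oc]$ is finite and depends only on $Y$ and $K$. Working prime by prime via the Chinese remainder theorem applied to $n$, the completion $\Oc^{\max}\otimes\Z_\ell$ at each $\ell\mid n$ is either a matrix algebra over an unramified extension of $\Z_\ell$ or a maximal order in a division algebra over $\Q_\ell$ (a non-commutative discrete valuation ring); in both cases a Smith-type normal form is available. Hence there exist $P,Q\in\GL_m(\Oc^{\max}/n\Oc^{\max})$ and a ``diagonal'' matrix $D_u$ with $PuQ\equiv D_u\pmod n$, from which $\ker(u)\cap Y_n$ can be read off in terms of the diagonal entries acting on $(Y_0)_n$. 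Applying the same transformation to $v$ and using $\ker(u)\cap Y_n\subset\ker(v)\cap Y_n$, a divisibility of invariant factors modulo $n$ produces $w_0\in M_m(\Oc^{\max})$ with $w_0 u \equiv v \pmod n$. Clearing denominators by $b$ gives $w\in M_m(\Oc)=\End(Y)$ satisfying $wu\equiv bv\pmod n$; combining $b$ with the commensurability constant from Step~1 yields the required $h=h(Y,K)$. If the resulting $w$ is not an isogeny, replace it by $w+kn\cdot\mathrm{id}_Y$ for any positive integer $k$ such that $-kn$ is not an eigenvalue of $w$ acting on some $V_\ell(Y)$; only finitely many $k$ are excluded, and the congruence modulo $n$ is preserved since $kn\cdot u\in n\End(Y)$.

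\textbf{Main obstacle.} The main technical challenge will be carrying out the Smith normal form uniformly over the non-commutative orders and ensuring that the resulting constant $h$ depends only on $Y$ and $K$, independent of $n$, $u$, and $v$. This requires careful bookkeeping of (i) the Poincar\'e-isogeny constant $c$, (ii) the index $b=[\Oc^{\max}:\Oc]$ in each isotypic component, and (iii) the primes dividing the discriminants of the $D_i$, where $\Oc^{\max}\otimes\Z_\ell$ is a maximal order in a genuine division algebra and invariant-factor theory is more delicate than in the commutative PID setting.
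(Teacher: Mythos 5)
The paper does not prove this lemma; it is quoted verbatim from \cite[Lemma 9.2 on p.~333]{ZarhinG}, so there is no in-text proof to compare your argument against. Judged on its own terms, your proof has the right scaffolding (reduce to orders in a semisimple $\Q$-algebra, pass to a maximal order, Smith normal form prime by prime, absorb all indices into $h$, perturb $w$ into an isogeny at the end), and the final perturbation step is correct. However, two of the reduction steps have genuine gaps.

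First, the Poincar\'e reduction does not transport the hypothesis. Writing $u'=\phi u\tilde\phi$ and $v'=\phi v\tilde\phi\in\End(Y')$ and taking $y\in Y'_n$ with $z:=\tilde\phi(y)\in Y_n$, one has $u'(y)=0$ if and only if $u(z)\in\ker(\phi)\cap Y_n$, \emph{not} if and only if $u(z)=0$. When $\gcd(c,n)>1$, the inclusion $\ker(u)\cap Y_n\subset\ker(v)\cap Y_n$ therefore neither implies nor is implied by $\ker(u')\cap Y'_n\subset\ker(v')\cap Y'_n$, so the problem does not simply ``decompose block-diagonally'' across the isotypic decomposition of $Y'$. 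You would have to treat the primes dividing $c$ separately --- which is precisely where a uniform $h$ is hard to extract --- or recast the reduction at the level of left ideals compared up to the bounded index $c$, rather than at the level of individual endomorphisms.

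Second, the Smith-normal-form step presupposes structure that is not in evidence. Diagonalizing $u$ over $\Oc^{\max}/n\Oc^{\max}$ lets you ``read off'' $\ker(u)\cap Y_n$ from the diagonal entries only if $(Y_0)_n$ is, after Morita reduction, a free module over $\Oc^{\max}/n\Oc^{\max}$. But $T_\ell(Y_0)$ is an $\Oc\otimes\Z_\ell$-lattice, not an $\Oc^{\max}\otimes\Z_\ell$-lattice, so for $\ell\mid b$ the matrices $P,Q\in\GL_m(\Oc^{\max}/n\Oc^{\max})$ do not even act on $Y_{\ell^k}$. To make this work one would need to replace $T_\ell(Y_0)$ by the $\Oc^{\max}$-saturated lattice $\Oc^{\max}_\ell\cdot T_\ell(Y_0)$, invoke that maximal orders over $\Z_\ell$ are hereditary (so this lattice is projective, and over a local noncommutative DVR actually free), and then compare the two lattices and the resulting kernels up to a factor of $b$. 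Moreover, even after diagonalization one is left with the problem of describing $\ker(d_i)\cap (Y_0)_n$ for a single element $d_i\in\Oc^{\max}$, which again requires knowing the module structure just described. You identify these difficulties in your ``main obstacle'' paragraph, but they are asserted rather than resolved, so as written the argument is a plausible sketch rather than a proof.
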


\begin{thm}
\label{endoNP} Let $X$ be an abelian variety of positive dimension
over a field $K$. Suppose that  the set $\IsP_P((X\times
X^t)^4,K,1)$ is finite. Then there exists a positive integer
$r_1=r_1(X,K)$ that enjoys the following properties.

Let $n$ be a positive integer, all whose prime divisors lie in $P$
and $m=n/(n,r_1)$. If $u_n \in \End_{\Gal(K)}(X_n)$ then there
exists $u \in \End(X)$ such that the images of $u_n$ and $u$ in
$\End_{\Gal(K)}(X_m)$ coincide.
\end{thm}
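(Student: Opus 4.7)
The strategy is to apply Corollary \ref{almostss}(ii) and Lemma \ref{lemma92} to the auxiliary variety $Y := X \times X$, reducing the problem to a $2 \times 2$-matrix manipulation. Since $(Y \times Y^t)^4 = (X \times X^t)^8$, the hypothesis of Theorem \ref{endoNP} is precisely what is needed to apply Corollary \ref{almostss} to $Y$. Let $r = r(Y, K)$ be the integer produced by that corollary (with all prime divisors in $P$), and let $h = h(Y, K)$ be the integer produced by Lemma \ref{lemma92} applied to $Y$.

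Given $u_n \in \End_{\Gal(K)}(X_n)$, the graph $\Gamma_{u_n} := \{(x, u_n(x)) : x \in X_n\}$ is a Galois submodule of $Y_n$. I apply Corollary \ref{almostss}(ii) to $Y$ with $W = \Gamma_{u_n}$, obtaining $\phi \in \End(Y)$ with $r\Gamma_{u_n} \subset \phi(Y_n) \subset \Gamma_{u_n}$. Writing $\phi$ as a $2 \times 2$ matrix $\begin{pmatrix} a & b \\ c & d \end{pmatrix}$ with entries in $\End(X)$, testing the containment $\phi(Y_n) \subset \Gamma_{u_n}$ on $(x, 0)$ and $(0, y)$ gives $c \equiv u_n a$ and $d \equiv u_n b$ in $\End(X_n)$, while $\phi(Y_n) \supset r\Gamma_{u_n}$ gives $a(X_n) + b(X_n) \supset rX_n = X_{m_1}$, where $m_1 := n/(n, r)$.

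I then apply Lemma \ref{lemma92} to $Y$ with the endomorphisms $u_Y := \begin{pmatrix} a & b \\ 0 & 0 \end{pmatrix}$ and $v_Y := \begin{pmatrix} c & d \\ 0 & 0 \end{pmatrix}$; the kernel inclusion $\ker(u_Y) \cap Y_n \subset \ker(v_Y) \cap Y_n$ follows immediately from $c = u_n a$ and $d = u_n b$ on $X_n$ (if $(x,y) \in Y_n$ satisfies $ax+by=0$, then $cx+dy = u_n(ax+by) = 0$ in $X_n$). The lemma produces $w_Y \in \End(Y)$ with $h v_Y - w_Y u_Y \in n \End(Y)$; writing $w_Y = \begin{pmatrix} \alpha & \beta \\ \gamma & \delta \end{pmatrix}$, the $(1,1)$ and $(1,2)$ entries of this congruence give $\alpha a \equiv h u_n a$ and $\alpha b \equiv h u_n b$ in $\End(X_n)$. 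Therefore $\alpha - h u_n$ vanishes on $a(X_n) + b(X_n) \supset X_{m_1}$, so $\alpha|_{X_{m_1}} = h u_n|_{X_{m_1}}$ in $\End(X_{m_1})$.

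To finish, I would define $r_1$ so that, with $m := n/(n, r_1)$, multiplication by $h$ is invertible on $X_m$; then $u := h' \alpha \in \End(X)$, where $h'$ is an integer inverse of $h$ modulo $m$, satisfies $u|_{X_m} = u_n|_{X_m}$. The main obstacle is the choice of $r_1$: for primes $\ell \nmid h$ the invertibility is automatic and no $\ell$-part is needed in $r_1$, but the finitely many primes $\ell \mid h$ cannot be handled uniformly in $n$ by any bounded exponent of $\ell$ in $r_1$, since $v_\ell(n)$ is unbounded. These exceptional primes must be treated separately using Theorem \ref{TateP}: the identification $\End(X) \otimes \Z_\ell = \End_{\Gal(K)}(T_\ell(X))$ together with a standard cohomological argument bound the cokernel of the reduction map $\End(X) \otimes \Z/\ell^k \to \End_{\Gal(K)}(X_{\ell^k})$ uniformly in $k$, and absorbing this bound into the $\ell$-part of $r_1$ for each such $\ell$ completes the construction.
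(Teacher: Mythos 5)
Your strategy is exactly the one the paper uses: pass to $Y=X\times X$ so that $(Y\times Y^t)^4=(X\times X^t)^8$, apply Corollary~\ref{almostss} to the graph $\Gamma_{u_n}\subset Y_n$ to get $\phi\in\End(Y)$ with $r\Gamma_{u_n}\subset\phi(Y_n)\subset\Gamma_{u_n}$, and then feed the rows of $\phi$ into Lemma~\ref{lemma92}. (The paper sets $r_1=r(Y,K)\,h(Y,K)$ and then simply cites the proof of Theorem~4.1 in \cite[Sect.~10]{ZarhinG} for the remaining computation.) Your matrix bookkeeping up to the conclusion $\alpha\,a\equiv h u_n a$, $\alpha\, b\equiv h u_n b\ (\mathrm{mod}\ n)$, hence $\alpha=h u_n$ on $X_{m_1}$ with $m_1=n/(n,r)$, is correct, and you are right that this is not yet the theorem: one still has to remove the factor $h$.

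The problem is with the way you propose to close that last step. First, ``invert $h$ modulo $m$'' fails precisely when $\gcd(h,m)>1$, and as you observe no fixed exponent of $\ell\mid h$ in $r_1$ can force $\gcd(h,m)=1$ uniformly in $n$. But your remedy -- invoking Theorem~\ref{TateP} and a cohomological bound on the cokernel of $\End(X)\otimes\Z/\ell^k\to\End_{\Gal(K)}(X_{\ell^k})$ -- is not available here. Theorem~\ref{endoNP} is stated for an \emph{arbitrary} field $K$ under the sole hypothesis that $\Is_P((X\times X^t)^8,K,1)$ is finite; Theorem~\ref{TateP} is only asserted for $K$ finitely generated over $\F_p$, so you would be proving a strictly weaker statement. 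Moreover, within the paper's own logical architecture Theorem~\ref{endoNP} (via Theorem~\ref{homoNP} and Corollary~\ref{TateHomo}) is one of the ingredients used in Remark~\ref{finalTate} to \emph{recover} Theorem~\ref{TateP} from the $d=1$ case of Theorem~\ref{finiteTate}; feeding Theorem~\ref{TateP} back in here would defeat that purpose. Finally, even granting the Tate-module statement, bounding the cokernel of the reduction maps uniformly in $k$ is not ``a standard cohomological argument'' from it alone -- it requires extra input of the same kind as the $\ell$-power isogeny finiteness you are trying to avoid. The correct route is to stay entirely inside the elementary circle of ideas: with $r_1=r(Y,K)\,h(Y,K)$, the finishing argument is the one carried out in detail in \cite[Sect.~10]{ZarhinG}, which handles the factor $h$ without ever inverting it (and without Theorem~\ref{TateP}); the paper's proof consists of observing that that argument transfers verbatim once the references to \cite[Cor.~3.5, Lemma~9.2]{ZarhinG} are replaced by Corollary~\ref{almostss} and Lemma~\ref{lemma92}.
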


\begin{proof}
Let us put $Y=X\times X$. Then $(Y\times Y^t)^4=(X \times X^t)^8$.
Let $r(Y)$ be as in Corollary \ref{almostss} and $h(Y)$  as in Lemma
\ref{lemma92}. Let us put
$$r_1=r_1(X,K)=r(Y,K) h(Y,K).$$
Now the proof goes literally as the the proof of Theorem 4.1 in
\cite[Sect. 10]{ZarhinG}, provided one replaces the references to
Cor. 3.5 and Lemma 9.2 of \cite{ZarhinG} by references to Cor.
\ref{almostss} and Lemma \ref{lemma92} respectively.
\end{proof}

Let $A$ and $B$ be abelian varieties over $K$. Applying Theorem
\label{endoNF} to $X=A\times B$ and using the obvious compatible
decompositions
$$\End(X)=\End(A)\oplus \End(B)\oplus \Hom(A,B)\oplus \Hom(B,A),$$
$$\End_{\Gal(K)}(X_n)=$$
$$\End_{\Gal(K)}(A_n)\oplus \End_{\Gal(K)}(B_n)
\oplus \Hom_{\Gal(K)}(A_n,B_n)\oplus \Hom_{\Gal(K)}(B_n,A_n),$$ we
obtain the following statement.

\begin{thm}
\label{homoNP} Let $A$  and $B$ be  abelian varieties of positive
dimension over a field $K$. Suppose that  the set $\Is_P((A\times
B\times A^t\times B^t)^8,K,1)$ is finite. Then there exists a
positive integer $r_2=r_2(A,B,K)=r_1(A\times B,K)$ that enjoys the
following properties.

Let $n$ be a positive integer, all whose prime divisors lie in $P$
and $m=n/(n,r_1)$. If $u_n \in \Hom_{\Gal(K)}(A_n,B_n)$ then there
exists $u \in \Hom(A,B)$ such that the images of $u_n$ and $u$ in
$\Hom_{\Gal(K)}(A_m,B_m)$ coincide.
\end{thm}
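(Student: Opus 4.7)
The plan is to deduce Theorem \ref{homoNP} directly from Theorem \ref{endoNP} applied to the product $X = A \times B$, exactly as suggested in the paragraph preceding the statement. First I would observe that if $X = A \times B$ then $X^t = A^t \times B^t$, so
$$(X \times X^t)^8 = (A \times B \times A^t \times B^t)^8,$$
which means the finiteness hypothesis on $\Is_P((A\times B\times A^t\times B^t)^8,K,1)$ is precisely the hypothesis needed to apply Theorem \ref{endoNP} to $X$. I would therefore set $r_2 = r_2(A,B,K) := r_1(X,K) = r_1(A\times B,K)$, with $r_1(X,K)$ produced by Theorem \ref{endoNP}.

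Next I would use the canonical direct sum decompositions
$$\End(X) = \End(A)\oplus\End(B)\oplus \Hom(A,B)\oplus \Hom(B,A),$$
$$X_n = A_n\oplus B_n,$$
which induce a compatible decomposition
$$\End_{\Gal(K)}(X_n) = \End_{\Gal(K)}(A_n)\oplus \End_{\Gal(K)}(B_n)\oplus \Hom_{\Gal(K)}(A_n,B_n)\oplus \Hom_{\Gal(K)}(B_n,A_n).$$
Given a Galois-equivariant homomorphism $u_n : A_n \to B_n$, I extend it by zero on the other three summands to obtain an element $\tilde u_n \in \End_{\Gal(K)}(X_n)$ whose only nonzero component under the decomposition is $u_n$ itself.

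Now I apply Theorem \ref{endoNP} to the pair $(X,K)$ and the endomorphism $\tilde u_n$, with $m = n/(n,r_1(X,K)) = n/(n,r_2)$. This yields an element $u \in \End(X)$ whose image in $\End_{\Gal(K)}(X_m)$ coincides with the image of $\tilde u_n$. I would then decompose $u = (u_{AA},u_{BB},u_{AB},u_{BA})$ along the four summands of $\End(X)$. Since the decompositions of $\End(X)$ and $\End_{\Gal(K)}(X_m)$ are compatible and since the image of $\tilde u_n$ in $\End_{\Gal(K)}(X_m)$ sits entirely in the $\Hom_{\Gal(K)}(A_m,B_m)$-summand, the components $u_{AA}, u_{BB}, u_{BA}$ must act as zero modulo $m$ and the component $u_{AB} \in \Hom(A,B)$ must have image in $\Hom_{\Gal(K)}(A_m,B_m)$ equal to that of $u_n$. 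Taking $u := u_{AB}$ yields the desired homomorphism in $\Hom(A,B)$.

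There is no substantive obstacle here beyond bookkeeping: once one recognizes that the hypothesis of Theorem \ref{endoNP} for $X = A\times B$ is exactly the hypothesis imposed on $(A\times B\times A^t\times B^t)^8$, the only thing to verify is that the endomorphism/hom-decomposition above is Galois-equivariant and compatible with reduction modulo $m$, which is immediate from the product structure. The genuine work has already been done in Theorem \ref{endoNP}; the present statement is essentially a packaging lemma.
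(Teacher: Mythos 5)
Your proposal is correct and follows the same route the paper itself indicates: apply Theorem \ref{endoNP} to $X=A\times B$, note that $(X\times X^t)^8=(A\times B\times A^t\times B^t)^8$, and transfer the conclusion along the compatible direct-sum decompositions of $\End(X)$ and $\End_{\Gal(K)}(X_n)$, extracting the $\Hom(A,B)$-component. The paper leaves this bookkeeping implicit in the paragraph preceding the statement; you have simply spelled it out.
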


\begin{cor}
\label{TateHomo} Let $A$  and $B$ be  abelian varieties of positive
dimension over a field $K$. Suppose that  the set $\Is_P((A\times
B\times A^t\times B^t)^8,K,1)$ is finite. Then for all primes $\ell
\in P$ the natural injection
$$\Hom(A,B)\otimes \Z_{\ell} \hookrightarrow
\Hom_{\Gal(K)}(T_{\ell}(A),T_{\ell}(B))$$ is bijective.
\end{cor}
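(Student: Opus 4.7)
The plan is to reduce to Theorem \ref{homoNP} by approximating an $\ell$-adic Galois homomorphism $\phi\colon T_{\ell}(A)\to T_{\ell}(B)$ modulo higher and higher powers of $\ell$, and then to conclude by an $\ell$-adic limit argument. Injectivity of the map $\Hom(A,B)\otimes\Z_{\ell}\hookrightarrow\Hom_{\Gal(K)}(T_{\ell}(A),T_{\ell}(B))$ is part of the setup and requires no comment; the content is surjectivity.

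Fix $\ell\in P$ and let $\phi\in\Hom_{\Gal(K)}(T_{\ell}(A),T_{\ell}(B))$. Using the canonical isomorphisms $X_{\ell^i}=T_{\ell}(X)/\ell^i T_{\ell}(X)$ from (0), the map $\phi$ induces, for every positive integer $i$, a Galois-equivariant homomorphism $\phi_i\colon A_{\ell^i}\to B_{\ell^i}$. Let $r_2=r_2(A,B,K)$ be the integer provided by Theorem \ref{homoNP}, and write $\ell^{a}$ for the $\ell$-part of $r_2$. For every $i>a$ the integer $n=\ell^i$ has all prime divisors in $P$ and satisfies $m:=n/(n,r_2)=\ell^{i-a}$. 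Applying Theorem \ref{homoNP} to $u_n:=\phi_i$ yields an element $u^{(i)}\in\Hom(A,B)$ whose image in $\Hom_{\Gal(K)}(A_{\ell^{i-a}},B_{\ell^{i-a}})$ coincides with that of $\phi_i$, i.e.,
$$u^{(i)}\equiv\phi\pmod{\ell^{i-a}\Hom_{\Z_{\ell}}(T_{\ell}(A),T_{\ell}(B))}.$$

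Now I pass to the $\ell$-adic limit. The target $\Hom_{\Z_{\ell}}(T_{\ell}(A),T_{\ell}(B))$ is a free $\Z_{\ell}$-module of finite rank, hence a Hausdorff complete topological $\Z_{\ell}$-module in the $\ell$-adic topology. The image of $\Hom(A,B)\otimes\Z_{\ell}$ inside it is a finitely generated $\Z_{\ell}$-submodule of this free $\Z_{\ell}$-module of finite rank, and in particular closed. By construction the sequence $\{u^{(i)}\otimes 1\}_{i>a}$ in $\Hom(A,B)\otimes\Z_{\ell}$ is Cauchy and converges $\ell$-adically to $\phi$. Closedness of the image forces $\phi\in\Hom(A,B)\otimes\Z_{\ell}$, which proves surjectivity and hence bijectivity.

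The only non-routine ingredient is Theorem \ref{homoNP} itself, and the main thing to be careful about is the interaction between the fixed integer $r_2$ and the powers of $\ell$: one needs $i$ large enough that $\ell^{i-a}\to\infty$ in order to get the approximation modulo arbitrarily high powers of $\ell$. Everything else is a standard compactness/closedness argument in finitely generated $\Z_{\ell}$-modules, so I do not anticipate any further obstacle.
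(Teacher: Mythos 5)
Your argument is correct and follows essentially the same route as the paper's own proof: reduce modulo $\ell^i$, invoke Theorem \ref{homoNP} to produce approximants $u^{(i)}\in\Hom(A,B)$ agreeing with $\phi$ modulo $\ell^{i-a}$, and pass to the $\ell$-adic limit using that $\Hom(A,B)\otimes\Z_{\ell}$ is closed in $\Hom_{\Z_{\ell}}(T_{\ell}(A),T_{\ell}(B))$. The only cosmetic difference is that you deduce closedness from $\Hom(A,B)\otimes\Z_{\ell}$ being a finitely generated submodule of a finite free $\Z_{\ell}$-module, whereas the paper invokes its compactness; both are equivalent here.
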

\begin{proof}[Proof of Corollary \ref{TateHomo}]
Let $r_2=r_2(A,B)$ be as in Theorem \ref{homoNP}. Let $\ell^{i_0}$
be the exact power of $\ell$ that divides $r_2$.  Let $v \in
\Hom_{\Gal(K)}(T_{\ell}(A),T_{\ell}(B))$. For each $i>i_0$, $v$
induces a homomorphism $v_i \in \Hom_{\Gal(K)}(A_{\ell^i},
B_{\ell^i})$. By Theorem \ref{homoNP}, there exists $u_i \in
\Hom(A,B)$ such that the images of $u_i$ and $v_i$ in
$\Hom(A_{\ell^{i-i_0}}, B_{\ell^{i-i_0}})$ coincide. This means that $u_i-v$
sends $T_{\ell}(A)$ into $\ell^{i-i_0}T_{\ell}(B)$. It follows that
$v$ coincides with the limit of the sequence
$\{u_i\}_{i>i_0}^{\infty}$ in
$\Hom_{\Z_{\ell}}(T_{\ell}(A),T_{\ell}(B))$ with respect to
$\ell$-adic topology. Since $\Hom(A,B)\otimes \Z_{\ell}$ is a
compact and therefore a closed subset of
$\Hom_{\Z_{\ell}}(T_{\ell}(A),T_{\ell}(B))$, the limit $v$ also lies
in $\Hom(A,B)\otimes \Z_{\ell}$.
\end{proof}

The following lemma will be proven at the end of this section.

\begin{lem}
\label{endoTP} Let $X$ be an abelian variety of positive dimension
over a field $K$. Suppose that  the set $\IsP_P((X\times
X^t)^4,K,1)$ is finite. Let $r=r(X,K)$ be as in Corollary
\ref{almostss}. Then every $\ell \in P$ enjoys the following
properties.

 Let  $\ST$ be a Galois-invariant
$\Z_{\ell}$-submodule in $T_{\ell}(X)$ such that the quotient $T_{\ell}(X)/\ST$ is torsion-free. Then there exists $u\in
\End(X)\otimes \Z_{\ell}$ such that
$$ r_1 \cdot \ST \subset u(T_{\ell}(X)) \subset \ST.$$
\end{lem}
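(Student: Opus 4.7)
The plan is to apply Corollary \ref{almostss}(ii) at each finite level $n=\ell^i$ and then pass to an $\ell$-adic limit using compactness. First, because $T_\ell(X)/\ST$ is torsion-free, the map $\ST/\ell^i\ST \to T_\ell(X)/\ell^i T_\ell(X) = X_{\ell^i}$ is injective and identifies $W_i := (\ST + \ell^i T_\ell(X))/\ell^i T_\ell(X)$ with a Galois submodule of $X_{\ell^i}$. Since $\ell \in P$, Corollary \ref{almostss}(ii) applied with $n=\ell^i$ produces $u_i \in \End(X)$ satisfying $r W_i \subset u_i(X_{\ell^i}) \subset W_i$. Viewing $u_i$ as an element of $\End_{\Z_\ell}(T_\ell(X))$ and lifting, this translates to
\[ u_i(T_\ell(X)) \subset \ST + \ell^i T_\ell(X), \qquad r\ST \subset u_i(T_\ell(X)) + \ell^i T_\ell(X). \]

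Next, I would extract a convergent subsequence. Since $\End(X)$ is a finitely generated abelian group, each quotient $\End(X)/\ell^j \End(X)$ is finite, so by a standard diagonal/pigeonhole argument I can pass to a subsequence $u_{i_k}$ whose reduction modulo $\ell^j$ stabilizes for every fixed $j$. Equivalently, $u_{i_k}$ converges $\ell$-adically to some $u \in \End(X) \otimes \Z_\ell$.

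Finally, I would verify the two desired inclusions for this limit. For $u(T_\ell(X))\subset \ST$: given $t\in T_\ell(X)$, each $u_{i_k}(t)$ lies in $\ST + \ell^{i_k} T_\ell(X)$ and the continuity of the action on $T_\ell(X)$ gives $u_{i_k}(t) \to u(t)$; hence $u(t) \in \bigcap_i (\ST + \ell^i T_\ell(X)) = \ST$, using that $\ST$ is closed. For $r\ST \subset u(T_\ell(X))$: fix $s \in r\ST$ and write $s = u_{i_k}(t_k) + \ell^{i_k} w_k$ with $t_k, w_k \in T_\ell(X)$. By compactness of $T_\ell(X)$, extract a further subsequence with $t_{k_j} \to t$; since $(u',t') \mapsto u'(t')$ is continuous on $(\End(X)\otimes\Z_\ell)\times T_\ell(X)$ and $\ell^{i_{k_j}} w_{k_j} \to 0$, we get $s = u(t) \in u(T_\ell(X))$.

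The main obstacle is that Corollary \ref{almostss}(ii) provides no a priori control on the $\ell$-adic size of $u_i$, so one cannot simply extract a limit in the noncompact module $\End(X)\otimes \Z_\ell$ by a direct Cauchy argument. The diagonal extraction via the finite quotients $\End(X)/\ell^j\End(X)$ is what makes the limit passage go through; once a convergent subsequence is in hand, the remaining two inclusions are routine $\ell$-adic compactness statements.
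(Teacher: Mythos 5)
Your proof is correct and follows essentially the same route as the paper: apply Corollary \ref{almostss}(ii) at each level $\ell^i$ to the Galois submodule $\ST/\ell^i\ST\subset X_{\ell^i}$, pass to a convergent subsequence of the resulting $u_i$'s, and verify the two inclusions in the limit using compactness of $T_\ell(X)$. One small correction to your closing paragraph: $\End(X)\otimes\Z_\ell$ is in fact compact (it is a finitely generated free $\Z_\ell$-module, since $\End(X)$ is a finitely generated free $\Z$-module), so the obstacle you describe does not actually exist---your diagonal extraction through the finite quotients $\End(X)/\ell^j\End(X)$ is simply an explicit verification of the sequential compactness that the paper invokes directly.
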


\begin{thm}
\label{ssTateP} Let $X$ be an abelian variety of positive dimension
over a field $K$. Suppose that  the set $\IsP_P((X\times
X^t)^8,K,1)$ is finite. Then every $\ell \in P$ enjoys the following
properties.

If $\W$ is a $\Gal(K)$-invariant $\Q_{\ell}$-vector subspace in
$V_{\ell}(X)$  then there exists $\tilde{u} \in \End(X)\otimes
\Q_{\ell}$ such that $\tilde{u}^2=\tilde{u}$ and
$\tilde{u}(V_{\ell}(X))=\W$. In particular,
$V_{\ell}(X)$ splits into a direct sum
$$V_{\ell}(X)=\tilde{u}(V_{\ell}(X))\oplus (1-\tilde{u})(V_{\ell}(X))=\W\oplus
(1-\tilde{u})(V_{\ell}(X))$$ of its Galois submodules $\W$ and
$(1-\tilde{u})(V_{\ell}(X))$ and the $\Gal(K)$-module $V_{\ell}(X)$
is semisimple.
\end{thm}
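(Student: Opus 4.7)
The plan is to associate to $\W$ a Galois-stable $\Z_\ell$-lattice $\ST$ in $T_\ell(X)$, invoke Lemma \ref{endoTP} to produce an element of $\End(X)\otimes\Z_\ell$ whose image in $V_\ell(X)$ is exactly $\W$, and then replace it by an idempotent using the semisimplicity of $A:=\End(X)\otimes\Q_\ell=\End^0(X)\otimes_{\Q}\Q_\ell$.

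First I would set $\ST:=\W\cap T_\ell(X)$. This is $\Gal(K)$-stable, and $\ST\otimes_{\Z_\ell}\Q_\ell=\W$: indeed any $w\in\W\subset V_\ell(X)$ satisfies $\ell^n w\in T_\ell(X)\cap \W=\ST$ for some $n\ge 0$. The quotient $T_\ell(X)/\ST$ is torsion-free: if $x\in T_\ell(X)$ satisfies $\ell x\in \ST\subset \W$, then $x\in \W$ because $\W$ is a $\Q_\ell$-subspace, hence $x\in \W\cap T_\ell(X)=\ST$. Lemma \ref{endoTP} (whose hypothesis is implied by our assumption on $\Is_P((X\times X^t)^8,K,1)$) therefore produces an element $u\in \End(X)\otimes\Z_\ell$ and a positive integer constant $c$ depending only on $X$ and $K$ such that $c\cdot\ST\subset u(T_\ell(X))\subset \ST$. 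Tensoring with $\Q_\ell$ collapses both sides to $\W$, so, viewing $u$ as an element of $A\subset \End_{\Q_\ell}(V_\ell(X))$, one obtains $u(V_\ell(X))=\W$.

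Next, $A$ is a finite-dimensional semisimple $\Q_\ell$-algebra, because $\End^0(X)$ is a finite-dimensional semisimple $\Q$-algebra. Hence the right ideal $I:=uA$ is of the form $\tilde u A$ for some idempotent $\tilde u\in A$. From $\tilde u\in uA$ we get $\tilde u(V_\ell(X))\subset u(V_\ell(X))=\W$, while writing $u=\tilde u b$ with $b\in A$ yields the reverse inclusion, so $\tilde u(V_\ell(X))=\W$. Because $\tilde u\in\End(X)\otimes\Q_\ell$ commutes with the $\Gal(K)$-action on $V_\ell(X)$, the standard idempotent decomposition $V_\ell(X)=\tilde u(V_\ell(X))\oplus (1-\tilde u)(V_\ell(X))$ is a direct sum of Galois submodules, which is precisely the asserted splitting with first summand $\W$. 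Since $\W$ was an arbitrary Galois-stable subspace, $V_\ell(X)$ is semisimple as a $\Gal(K)$-module.

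The heart of the argument is Lemma \ref{endoTP}, and given it the work reduces to formal manipulation inside the semisimple algebra $A$. The only subtle point is that the constant $c$ may well be divisible by $\ell$; this does no harm here (unlike in the $X_\ell$-statement of Theorem \ref{ssL}, where the analogous constant forced the exclusion of finitely many primes $\ell$) precisely because we are working over $\Q_\ell$, where $c$ is invertible, which is why the conclusion is available for \emph{every} $\ell\in P$.
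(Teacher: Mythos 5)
Your argument is correct and follows essentially the same route as the paper: set $\ST=\W\cap T_\ell(X)$, apply Lemma \ref{endoTP} to produce $u\in\End(X)\otimes\Z_\ell$ with $u(V_\ell(X))=\W$, and replace $u$ by an idempotent generator of its principal one-sided ideal in the semisimple algebra $A:=\End(X)\otimes\Q_\ell$. One detail is actually in your favour: you work with the \emph{right} ideal $uA$, which is what the argument requires --- if $uA=\tilde u A$ with $\tilde u^2=\tilde u$, then $\tilde u\in uA$ gives $\tilde u(V_\ell(X))\subset u(V_\ell(X))$ and $u\in\tilde u A$ gives the reverse inclusion. The paper's proof of this theorem instead says ``left ideal,'' which appears to be a slip: for a left ideal $Au=A\tilde u$ one only gets $\tilde u=au$ and hence $\tilde u(V_\ell(X))=a(u(V_\ell(X)))$, which need not coincide with $u(V_\ell(X))$; the paper's own proof of the parallel Theorem \ref{ssL} correctly uses the right ideal. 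Finally, you assert in passing that finiteness of $\Is_P((X\times X^t)^8,K,1)$ yields the finiteness of $\Is_P((X\times X^t)^4,K,1)$ required by Lemma \ref{endoTP}. That implication is not obvious, but the paper's proof leaves the same exponent mismatch unaddressed, and in the intended applications both finiteness statements come from Theorem \ref{finiteTate}, so nothing is lost.
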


\begin{proof}[Proof of Theorem \ref{ssTateP}]
Let us put $\ST=\W\bigcap T_{\ell}(X)$.  Clearly, $\ST$ is a  Galois-invariant free $\Z_{\ell}$-submodule in $T_{\ell}(X)$
and $\W=\Q_{\ell}\ST$. In addition,  the quotient $T_{\ell}(X)/\ST$ is torsion-free.

By Lemma \ref{endoTP}, there exists $u\in
\End(X)\otimes \Z_{\ell}$ such that
$$ r \cdot \ST \subset u(T_{\ell}(X)) \subset \ST.$$
It follows that
$$r \cdot \W \subset u(V_{\ell}(X)) \subset \W.$$
Since $r \cdot \W=\W$, we have $u(V_{\ell}(X))=\W$. Notice that
$$\End(X)\otimes \Z_{\ell}\subset \End(X)\otimes
\Q_{\ell}=\End^0(X)\otimes_{\Q}\Q_{\ell}$$ and $\End^0(X)$ is a
finite-dimensional semisimple $\Q$-algebra. It follows that
$\End(X)\otimes \Q_{\ell}$ is a finite-dimensional semisimple
$\Q_{\ell}$-algebra. Let $I$ the left ideal in semisimple
$\End(X)\otimes \Q_{\ell}$ generated by $u$; there is an idempotent
$\tilde{u}$ that generates $I$. Clearly.
$$\tilde{u}(V_{\ell}(X))=u(V_{\ell}(X))=\W.$$
\end{proof}

\begin{proof}[Proof of Lemma \ref{endoTP}]
If $\SS=\{0\}$ then we just put $u=0$. If $\S=T_{\ell}(X)$ then we
take as $u$ the identity automorphism $1_X$ of $X$.

So, further we assume that $\SS$ is a proper free $\Z_{\ell}$-module
in $T_{\ell}(X)$  of positive rank say, $d$ and let $\{e_1, \dots
,e_j \dots e_d\}$ be its basis. Since $\SS$ is pure in
$T_{\ell}(X)$, for all positive integers $i$ the natural
homomorphism of Galois modules
$$\SS_i:=\SS/\ell^i\SS \to T_{\ell}(X)/\ell^i T_{\ell}(X)=X_{\ell^i}$$
is an injection of free $\Z/\ell^i\Z$-modules. Further, we will
identify $\SS_i$ with its image in $X_{\ell^i}$. We write
$\bar{e}^{i}_j$ for the image of $e_j$ in $\S_i \subset X_{\ell^i}$;
clearly, the $d$-element set $\{\bar{e}^{i}_j\}_{j=1}^d$ is a basis
of the free $\Z/\ell^i\Z$-module $\S_i$. By Corollary \ref{almostss}
applied to $n=\ell^i$ and $W=\SS_i$ there exists $u_i \in \End(X)$
such that
$$r \SS_i \subset u_i(X_{\ell^i}) \subset \SS_i.$$
In particular, $u_i(X_{\ell^i})$ contains $r \bar{e}^{i}_j$ for all
$j$. It is also clear that for each $z \in T_{\ell}(X)$
$$u_i(z)\in \SS+\ell^i T_{\ell}(X).$$

 For each $\bar{e}^{i}_j$ pick an element
 $$\bar{z}^{i}_j\in T_{\ell}(X)/\ell^i T_{\ell}(X)=X_{\ell^i}$$
 such that $u_i(\bar{z}^{i}_j)=r \bar{e}^{i}_j$. Let us pick
${z}^{i}_j\in T_{\ell}(X)$ such that its image in $X_{\ell^i}$
coincides with $\bar{z}^{i}_j$. Clearly, the image of
$u_i({z}^{i}_j)$ in $T_{\ell}(X)/\ell^i T_{\ell}(X)=X_{\ell^i}$
 equals $r \bar{e}^{i}_j$. Using the compactness of
$\End(X)\otimes \Z_{\ell}$ and $T_{\ell}(X)$, let us choose an
infinite increasing sequence of positive integers $i_1<i_2 < \dots <
i_m < \dots$ such that $\{u_{i_m}\}_{m=1}^{\infty}$ converges in
$\End(X)\otimes \Z_{\ell}$ to some $u$ and
$\{{z}^{j}_{i_m}\}_{m=1}^{\infty}$ converges in
$T_{\ell}(X)$ to some $z^{j}$ for all $j$ with $1 \le j\le d$. It
follows that
$$u(z^{j})=\lim u_{i_m}({z}^{j}_{i_m})=r e_j.$$
This implies that $u(T_{\ell}(X)) \supset r\cdot \S$. On the other
hand, for each $z \in T_{\ell}(X)$
$$u_{i_m}(z) \in \S +\ell^{i_m} T_{\ell}(X).$$
Since $\{i_m\}_{m=1}^{\infty}$ is an increasing set of positive
integers, $\{\SS +\ell^{i_m} T_{\ell}(X)\} _{m=1}^{\infty}$ is a
decreasing set of compact sets whose intersection is compact $\SS$.
It follows that $u(z)=\lim u_{i_m}(z)$ lies in $\SS$.
\end{proof}

\begin{lem}
\label{jacobsonL} Let $X$ be an abelian variety of positive
dimension over $K$. Let $\ell$ be a prime that is different from
$\fchar(K)$ and such that the $\Gal(K)$-module $X_{\ell}$ is
semisimple and
$$\End_{\Gal(K)}(X_{\ell})=\End(X)\otimes \Z/\ell\Z.$$
Then the centralizer of $\End(X)\otimes\Z_{\ell}$ in
$\End_{\Z_{\ell}}(T_{\ell}(X))$ coincides with
$\Z_{\ell}G_{\ell,X,K}$.
\end{lem}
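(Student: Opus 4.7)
The plan is to combine the classical Jacobson density/double centralizer theorem applied modulo $\ell$ with a Nakayama-style lift to $\Z_\ell$.

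First I would work modulo $\ell$. Let $R\subseteq \End_{\F_\ell}(A_\ell)$ be the $\F_\ell$-subalgebra spanned by $G_{\ell,A,K}$. Then $A_\ell$ is a finite-dimensional faithful $R$-module that is semisimple as a $\Gal(K)$-module, hence as an $R$-module (since the $R$-action on $A_\ell$ is the Galois action). By hypothesis
$$\End_R(A_\ell)=\End_{\Gal(K)}(A_\ell)=\End(A)\otimes \Z/\ell\Z,$$
so the double centralizer theorem (immediate from Jacobson density in finite dimension, as cited in Section \ref{jacobson}) gives that $R$ coincides with the centralizer of $\End(A)\otimes \Z/\ell\Z$ in $\End_{\F_\ell}(A_\ell)$.

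Next I would lift to $\Z_\ell$. Write $M:=\End_{\Z_\ell}(T_\ell(A))$, let $Z$ be the centralizer of $\End(A)\otimes \Z_\ell$ in $M$, and set $Z_0:=\Z_\ell G_{\ell,A,K}\subseteq Z$. Both $Z$ and $Z_0$ are $\Z_\ell$-submodules of the free $\Z_\ell$-module $M$ of finite rank, hence finitely generated. The reduction map $M\to \End_{\F_\ell}(A_\ell)$ carries $Z$ into the mod-$\ell$ centralizer computed above, which equals the image of $Z_0$; therefore $Z\subseteq Z_0+\ell M$. For any $z\in Z$, write $z=z_0+\ell m$ with $z_0\in Z_0$ and $m\in M$. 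Then $\ell m=z-z_0\in Z$, and for every $e\in \End(A)\otimes \Z_\ell$ one has $\ell(em-me)=e(\ell m)-(\ell m)e=0$; since $M$ is torsion-free over $\Z_\ell$, this forces $em=me$, so $m\in Z$. Hence $Z=Z_0+\ell Z$, and Nakayama's lemma applied to the finitely generated $\Z_\ell$-module $Z/Z_0$ forces $Z=Z_0$.

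The mod-$\ell$ step is essentially handed to us by the hypotheses via Jacobson density, and the Nakayama descent is routine once set up; the only subtle ingredient I expect to be the main obstacle is the ``commutation lifting'' $\ell m\in Z \Rightarrow m\in Z$, which requires that $\ell$ be a non-zero-divisor on $M$. That observation is mild but essential, because without it one could only conclude equality after inverting $\ell$, which would be much weaker than the integral statement of the lemma.
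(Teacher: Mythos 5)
Your argument is correct and essentially identical to the paper's: Jacobson density modulo $\ell$ to identify $\F_\ell\tilde G_{\ell,A,K}$ as the mod-$\ell$ centralizer, then Nakayama to lift. The only cosmetic difference is that you prove explicitly that the centralizer $Z$ is saturated in $\End_{\Z_\ell}(T_\ell(A))$ (via the torsion-free argument $\ell m\in Z\Rightarrow m\in Z$), whereas the paper simply asserts this as clear before reducing mod $\ell$; the Nakayama step is then applied to $Z/Z_0$ rather than to the equation $M=\Z_\ell G_{\ell,A,K}+\ell M$, but these are the same.
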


\begin{proof}[Proof of Lemma \ref{jacobsonL}]
Clearly, $\tilde{G}_{X,\ell,K}$ is the image of
$\Gal(K)\twoheadrightarrow G_{\ell,X,K} \to
\Aut_{\F_{\ell}}(X_{\ell})$. It follows that the
$\tilde{G}_{X,\ell,K}$-module $X_{\ell}$ is semisimple and
$$\End_{\tilde{G}_{X,\ell,K}}(X_{\ell})=\End(X)\otimes \Z/\ell\Z.$$
By the Jacobson density theorem, $\F_{\ell}\tilde{G}_{X,\ell,K}$
coincides with the centralizer of $\End(X)\otimes \Z/\ell\Z$ in
$\End_{\F_{\ell}}(X_{\ell})$. (Here $\F_{\ell}\tilde{G}_{X,\ell,K}$
is the $\F_{\ell}$-subalgebra of $\End_{\F_{\ell}}(X_{\ell})$
spanned by $\tilde{G}_{X,\ell,K}$.)

Let $M$ be the centralizer of $\End(X)\otimes\Z_{\ell}$ in
$\End_{\Z_{\ell}}(T_{\ell}(X))$. Clearly, $M$ is a saturated
$\Z_{\ell}$-submodule of $\End_{\Z_{\ell}}(T_{\ell}(X))$ (i.e., the
quotient $\End_{\Z_{\ell}}(T_{\ell}(X))/M$ is torsion-free); in
addition, $M$ contains $\Z_{\ell}G_{\ell,X,K}$. We have
$$M/\ell M \subset
\End_{\Z_{\ell}}(T_{\ell}(X))\otimes \Z_{\ell}/\ell
\Z_{\ell}=\End_{\F_{\ell}}(X_{\ell}).$$ Clearly, $M/\ell M$ lies in
the centralizer of
$$\End(X)\otimes\Z_{\ell}=\otimes \Z_{\ell}/\ell
\Z_{\ell}=\End(X)\otimes \Z/\ell\Z.$$ This implies that
$$M/\ell M \subset \F_{\ell}\tilde{G}_{X,\ell,K}\subset
\End_{\F_{\ell}}(X_{\ell}).$$ On the other hand, the image of
$\Z_{\ell}G_{\ell,X,K}$ in
$$\End_{\Z_{\ell}}(T_{\ell}(X))\otimes \Z_{\ell}/\ell
\Z_{\ell}=\End_{\F_{\ell}}(X_{\ell})$$
 obviously coincides with $\F_{\ell}\tilde{G}_{X,\ell,K}$. Since
 this image lies in $M/\ell M$, we conclude that $M/\ell=\F_{\ell}\tilde{G}_{X,\ell,K}$
 and $M=\Z_{\ell}G_{\ell,X,K}+\ell\cdot M$. It follows from
 Nakayama's Lemma that the $\Z_{\ell}$-module $M$ coincides with its submodule $\Z_{\ell}G_{\ell,X,K}$.
\end{proof}

\begin{rem}
\label{finalTate}
Let $P$ be a singleton $\{\ell\}$ and $d=1$. Now  Theorem \ref{finiteTate}  combined with Corollary \ref{TateHomo} and Theorem \ref{ssTateP} implies readily
Theorem \ref{TateP}.
\end{rem}

\section{Proof of main results}
\label{mainProof} Throughout this section, $K$ is a field that is
finitely generated over $\F_p$. Let us put $P=\P\setminus \{p\}$.

\begin{proof}[Proof of Corollary \ref{semisimpleF} and   Theorem
\ref{endoN}]
Corollary \ref{semisimpleF} follows readily from Corollary
\ref{almostss} combined with Theorem \ref{finiteTate}. Theorem
\ref{endoN} follows readily from Theorem \ref{endoNP} combined with
Theorem \ref{finiteTate}.
\end{proof}

\begin{proof}[Proof of Theorems \ref{finiteisogp} and
\ref{homoN}]  One has only to combine Theorem \ref{finiteTate} with
Theorems \ref{finiteisog} and \ref{homoNP} respectively.
\end{proof}

\begin{proof}[Proof of Theorem \ref{jacobsonT}]
The assertion follows readily from Lemma \ref{jacobsonL} combined
with Corollary \ref{semisimpleF} and Theorem \ref{endoN}.
\end{proof}

\begin{proof}[Proof of Theorem \ref{CM}]
The proof of \cite[Theorem 4.7.4]{ZarhinMZ3} works literally
provided one replaces the reference to \cite[Theorem
1.1.1]{ZarhinMZ2} by references to Corollaries \ref{semisimpleF} and
\ref{tateFiniteC}.
\end{proof}

\begin{proof}[Proof of Theorem \ref{maxab}]
Theorem \ref{maxab} is an immediate corollary of the conjunction of
two following statements. (Compare with Theorems 2 and 3 on p. 133
of \cite{ZarhinDUKE}.)

\begin{thm}
\label{maxabprimeL} Let $X$ be a simple abelian variety over $K$
that is not of CM type. Let $\ell\ne p$ be a prime, $W$ a nonzero
Galois-invariant $\Q_{\ell}$-vector space in $V_{\ell}(X)$ and $G_W$
the image of $\Gal(K)$ in $\Aut_{\Q_{\ell}}(W)$. Then the group
$G_W$ is not commutative.
\end{thm}

\begin{thm}
\label{maxabprimeELL} Let $X$ be a simple abelian variety over $K$
that is not of CM type. Then for all but finitely many primes
$\ell\ne p$ the following condition holds: Let $W$ a nonzero
Galois-invariant $\F_{\ell}$-vector space in $X_{\ell}$ and $G_W$
the image of $\Gal(K)$ in $\Aut_{\F_{\ell}}(W)$. Then the group
$G_W$ is not commutative.
\end{thm}

\begin{proof}[Proof of Theorems \ref{maxabprimeL} and \ref{maxabprimeELL}]
 The proof of  Theorems 2 and 3 in \cite[Sect.  3]{ZarhinDUKE} works literally
provided one replaces the reference to  \cite[p. 139, Statements 1
and 2]{ZarhinDUKE}  by references   to Theorem \ref{TateP} (instead
of Statement 1) and
 to Corollaries \ref{semisimpleF} and
\ref{tateFiniteC} (instead of Statement 2).
\end{proof}
\end{proof}

\begin{proof}[Proof of Theorems \ref{ConstEll} and \ref{ConstRoot}]
The proofs of Theorems 7 and 8 in \cite[Sect.  4]{ZarhinDUKE} work
literally in our case for Theorems \ref{ConstEll} and
\ref{ConstRoot} respectively. (As in the proof  of Theorems
\ref{maxabprimeL} and \ref{maxabprimeELL} one should replace the
reference to  \cite[p. 139, Statements 1 and 2]{ZarhinDUKE}  by
references   to Theorem \ref{TateP}  and to Corollaries
\ref{semisimpleF} and \ref{tateFiniteC}.)
\end{proof}

\section{Torsion and ramification in solvable extensions}
\label{lifting}
\begin{sect}
\label{ramification} Let $K$ be an arbitrary field and $\Oc \subset
K$ a discrete valuation ring whose field of fractions coincides
with $K$. We write $\p$ for the maximal ideal of $\Oc$ and $p$ for
the characteristic of the residue field $\Oc/\p$. Let $L/K$ be a
finite Galois field extension with Galois group $\Gal(L/K)$ and
$\Oc_L$ the integral closure of $\Oc$ in $L$. The following
assertion is well known (see, e.g., \cite[Ch. 5, Sect. 7--10]{ZS},
\cite[Ch. 1, Sect. 7]{SerreCL}).

\begin{itemize}
\item[(i)]
$\Oc_L$ is a principal ideal domain, the set of its maximal ideals
is finite and consists of (say) $g$ maximal ideals $\q_1, \dots ,
\q_g$ such that
$$\p\Oc=\left(\prod_{i=1}^g \q_i\right)^{e(L/K)}$$ where $e(L/K)$ is
the (weak) ramification index at  $\p$. The degree
$f(L/K)=[\Oc_L/\q_i: \Oc/\p]$ of the field extension
$(\Oc_L/\q_i)/(\Oc/\p)$ equals the product $f_0 p^s$ where $f_0$ is
the degree of the separable closure  of $\Oc/\p$ in $\Oc_L/\q_i$ and
$s$ is a nonnegative integer that vanishes if and only if
$\Oc_L/\q_i$ is separable over $\Oc/p$. The integers $f_0$ and $s$
do not depend on $i$.
 The product
$$e(L/K)\cdot f_0 p^s g= e(L/K) \cdot f \cdot g=[L:K]=\#(\Gal(L/K))$$
In particular, $e(L/K)\cdot p^s$ divides $[L:K]$. The field
extension $L/K$ is tamely ramified at $p$ if and only if
$\#(I(\q_i))$ is not divisible by $p$, i.e., $e(L/K)$ is not
divisible by $p$ and $\Oc_L/\q_i$ is separable over $\Oc/p$. Here
$$I(\q_i)\subset \Gal(L/K)$$
 is the inertia subgroup attached to $\q_i$.
\item[(ii)]
The Galois group $\Gal(L/K)$ acts transitively on the set
$\{\q_i\mid 1 \le i \le g\}$.  The corresponding inertia subgroups
$I(\q_i)\subset \Gal(L/K)$ are conjugate subgroups of order $e(L/K)
p^s$  in $\Gal(L/K)$. (See \cite[Ch. 5, Sect. 10, Th. 24 and its
proof]{ZS}.)
\item[(iii)]
Let $L_0/K$ be a Galois subextension of $L/K$, i.e., $L_0/K$ is a
Galois field extension and $L_0\subset L$. Then $\q_i^{0}=\q_i
\bigcap \Oc_{L_0}$ is a maximal ideal in $\Oc_{L_0}$ that lies
above $\p$. The image of $I(\q_i)$ under the surjection $\Gal(L/K)
\twoheadrightarrow \Gal(L_0/K)$ coincides with the inertia subgroup
$$I(\q_i^{0})\subset \Gal(L_0/K)$$
attached to $\q_i^{0}$
\cite[Ch. 1, Sect. 7, Prop. 22(b)]{SerreCL}; in
particular, $\#(I(\q_i^{0}))$ divides $\#(I(q_i))$. On the other
hand, $\#(I(\q_i^{0}))$ divides $\#(\Gal(L_0/K))=[L_0:K]$.
 This implies that if $[L_0:K]$ and $\#(I(q_i))$ are relatively
prime then $\#(I(\q_i^{0}))=1$, i.e., $L_0/K$ is {\sl unramified} at
$\p$.
\end{itemize}

Let $Y$ be an abelian variety of positive dimension over $K$. Let
$n\ge 3$ be an integer that is {not} divisible by $p$. Assume that
$Y_n \subset Y(K)$, i.e., all points of order $n$ on $X$ are defined
over $K$.

Let $\Y \to \Spec(\Oc)$ be the N\'eron model of $Y$ \cite{Neron}; it
is a smooth group scheme whose generic fiber coincides with $Y$.
Since $Y_n \subset Y(K)$, the Raynaud criterion \cite[Prop.
4.7]{GrothendieckN} tells us that $Y$ has {\sl semistable} reduction
at $\p$, i.e., $\Y$ is a semiabelian group scheme.

Let $\ell$ be a prime different from $p$. For all positive integers
$j$ the field $K(Y_{m_j})$ with $m_j=\ell^j$ is a finite Galois
extension. The following assertion was inspired by \cite[Ch. XII,
Sect. 2.0 on p. 242]{MB}.

\end{sect}

\begin{lem}
\label{ramIndex} Let $n\ge 3$ be an integer that is {not} divisible
by $p$. Assume that $Y_n \subset Y(K)$.
 Let $L=K(Y_{\ell})$ and
$\q$ be a maximal ideal in $\Oc_{L}$, which lies above $\p$. Then:

\begin{enumerate}
\item[(i)]  The inertia group $I(\q)$ is a finite
commutative $\ell$-group. In particular,  $L/K$ is tamely ramified
at $\p$.
\item[(ii)]
Let $L_0/K$ be a Galois subextension of $L/K$. If $[L_0:K]$ is not
divisible by $\ell$ then $L_0/K$ is unramified at $\p$.
\end{enumerate}
\end{lem}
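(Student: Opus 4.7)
The plan is to exploit the hypothesis $Y_n \subset Y(K)$ (with $n \ge 3$ prime to $p$) to force semistable reduction, and then to analyze the inertia action on $T_\ell(Y)$ via Grothendieck's monodromy theorem. As the paragraph preceding the lemma already observes, the Raynaud criterion ensures that the N\'eron model $\Y \to \Spec(\Oc)$ is semiabelian, i.e., $Y$ has semistable reduction at $\p$.

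For (i), I would invoke the standard Grothendieck package for semistable reduction (SGA 7, Expos\'e IX). Fix a place of $\bar{K}_s$ above $\p$ and let $I \subset \Gal(\bar{K}_s/K)$ be the corresponding (absolute) inertia subgroup. Then for every $\sigma \in I$ one has $(\sigma - 1)^2 = 0$ on $T_\ell(Y)$, and more precisely there is a single nilpotent ``monodromy'' operator $N$ with $N^2 = 0$ such that $\sigma$ acts on $T_\ell(Y)$ as $1 + t_\ell(\sigma)\, N$, where $t_\ell \colon I \to \Z_\ell(1)$ is the $\ell$-adic tame character. Because $N^2 = 0$, the assignment $c \mapsto 1 + cN$ is a group homomorphism from $(\Z_\ell,+)$ to $\Aut(T_\ell(Y))$, so the image of $I$ there is abelian and pro-$\ell$. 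Reducing modulo $\ell$, the image of $I$ in $\Aut(Y_\ell)$ is a finite commutative $\ell$-group. Since $L = K(Y_\ell)$, the group $\Gal(L/K)$ embeds into $\Aut(Y_\ell)$, and the subgroup $I(\q) \subset \Gal(L/K)$ is precisely the image of $I$ under the restriction map $\Gal(\bar{K}_s/K) \twoheadrightarrow \Gal(L/K)$. Hence $I(\q)$ itself is a finite commutative $\ell$-group. In particular $\#I(\q)$ is a power of $\ell$ and is prime to $p$, so by the characterization of tame ramification recorded in \S\ref{ramification}(i), $L/K$ is tamely ramified at $\p$.

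Part (ii) is then a direct consequence of \S\ref{ramification}(iii): the image of $I(\q)$ under the surjection $\Gal(L/K) \twoheadrightarrow \Gal(L_0/K)$ equals $I(\q^0)$, so $\#I(\q^0)$ divides both $\#I(\q)$, which is a power of $\ell$ by (i), and $[L_0:K]$, which is prime to $\ell$ by hypothesis. These two orders being coprime forces $\#I(\q^0) = 1$, i.e., $L_0/K$ is unramified at $\p$.

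The main obstacle is merely to cite the Grothendieck/Raynaud package cleanly: once one has that semistable reduction forces the action of inertia on $T_\ell(Y)$ to be unipotent of level two and to factor through the pro-$\ell$ abelian quotient of $I$, the remainder is elementary group theory. Both ingredients work uniformly in residue characteristic, so no case distinction $p=2$ vs.\ $p>2$ is needed.
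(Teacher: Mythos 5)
Your proof is correct and follows essentially the same route as the paper: both rely on the Raynaud criterion to obtain semistable reduction and then on Grothendieck's semistable monodromy theorem (SGA 7, Expos\'e IX, Prop.\ 3.5) to conclude that the inertia image in $\Aut_{\Z_\ell}(T_\ell(Y))$ is abelian pro-$\ell$, whence its image in $\Aut_{\F_\ell}(Y_\ell)$ is a finite commutative $\ell$-group. The paper phrases the monodromy step via the two-step filtration $\W\subset V_\ell(Y)$ and the resulting embedding of the inertia image into $\Hom_{\Q_\ell}(V_\ell(Y)/\W,\W)$, while you package the same unipotence as $\sigma\mapsto 1+t_\ell(\sigma)N$ with $N^2=0$; these are equivalent, and part (ii) is handled identically in both.
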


\begin{proof}[Proof of Lemma \ref{ramIndex}]
The assertion (ii) follows readily from (i). So, let us prove (i).

 For all positive integers $j$ let us put $L_j=K(Y_{m_j})$ and
$\Oc_j=\Oc_{L_j}$. We have $L_1=L, \Oc_1=\Oc_{L}$.
 We have a tower of Galois extensions
$$K \subset L_1 \subset L_2 \subset \dots L_j \subset \dots .$$
We write $L_{\infty}$ for the union $\bigcup_{i=1}^{\infty} L_i$.
For each $j$ pick a maximal ideal $\q^{(j)}$ in $\Oc_j$ in such a
way that $\q^{(1)}=\q$ and $\q^{(j+1)}$ lies above $\q^{(j)}$. (Such
a choice is possible, because the projective limit of nonempty
finite sets is nonempty.)  Then the Galois group
$\Gal(L_{\infty}/K)$ is the projective limit of finite groups
$\Gal(L_j/K)$'s. It is also clear that
$$\Gal(L_j/K)=\bar{\rho}_{m_j,Y}(\Gal(K)) \subset
\Aut_{\Z/m_j\Z}(Y_{m_j}),$$
$$\Gal(L_{\infty}/K)=\rho_{\ell,Y}(\Gal(K))\subset
\Aut_{\Z_{\ell}}(T_{\ell}(Y))\subset
\Aut_{\Q_{\ell}}(V_{\ell}(Y)).$$

Recall that the natural homomorphisms $I(\q^{(j+1)})\to I(\q^{(j)})$
are surjective for all $j$.
 Let $I_{\infty}$ be the projective limit
of the corresponding inertia subgroups $I(\q^{(j)})$; clearly,
$I_{\infty}$ is a compact subgroup of $\Gal(L_{\infty}/K)$ and for
each $j$ the natural group homomorphism $I_{\infty} \to I(\q^{(j)})$
is surjective, because the projective limit of nonempty finite sets
is also nonempty. Therefore one may view $I_{\infty}$ as a certain compact
subgroup of
$$\Aut_{\Z_{\ell}}(T_{\ell}(Y))\subset \Aut_{\Q_{\ell}}(V_{\ell}(Y)).$$
Since $Y$ has semistable reduction at $\q$, there exists a
$\Q_{\ell}$-vector subspace $\W \subset V_{\ell}(Y)$ such that
$I_{\infty}$ acts trivially on $\W$ and $V_{\ell}(Y)/\W$ \cite[Prop.
3.5]{GrothendieckN}. It gives us an injective continuous
homomorphism of topological groups $$I_{\infty} \to
\Hom_{\Q_{\ell}}(V_{\ell}(Y)/\W), \W),$$
$$\sigma \mapsto \{v+\W \mapsto \sigma(v)-v\} \ \forall \ \sigma \in I_{\infty}, \ v \in V_{\ell}(Y).$$
Since $I_{\infty}$ is compact,
there is a continuous isomorphism between
$I_{\infty}$ and its image, which is a compact subgroup of
$\Hom_{\Q_{\ell}}(V_{\ell}(Y)/\W, \W)$. Since the latter is a
finite-dimensional $\Q_{\ell}$-vector space, all of its  compact
subgroups  are commutative pro-$\ell$-groups (that are isomorphic
either to a direct sum of several copies of $\Z_{\ell}$ or to zero).
It follows that $I_{\infty}$ is also a commutative pro-$\ell$-group.
 Since there is a surjective continuous homomorphism $$I_{\infty}
 \to I(\q^{(j)}),$$ every $I(\q^{(j)})$ is a finite commutative $\ell$-group.
 This ends the proof, since $\q=\q^{(1)}$ and therefore $I(\q)=I(\q^{(1)})$.
\end{proof}

\begin{proof}[Proof of Theorem \ref{fongswan}]
If $K$ is finite then there is nothing to prove. So, let us assume
that $K$ is infinite. Let $d\ge 1$ be  the transcendence degree of
$K$ over $\F_p$. Let us pick a positive integer $n \ge 3$ that is
not divisible by $p$. Replacing $K$ by $K(X_n)$ and $X$ by $X
\times_K K(X_n)$ , we may assume that $X_n\subset X(K)$.

Let $P$ be an {\sl infinite} set of primes $\ell \ne p$ such that
$\tilde{G}_{\ell,X,K}$ is $\ell$-solvable. By deleting finitely many
primes from $P$, we may and will assume that the $\Gal(K)$-module
$X_{\ell}$ is semisimple for all $\ell \in P$. Since
$\tilde{G}_{\ell,X,K}$ is the image of $\Gal(K)$ in
$\Aut_{\F_{\ell}}(X_{\ell})\cong \GL(2g,\F_{\ell})$, the
$\tilde{G}_{\ell,X,K}$-module $X_{\ell}$ is semisimple for all
$\ell\in P$.

Let $\C$ be the field of complex numbers. Let us put $g=\dim(X)$.
Recall that $X_{\ell}$ is a
 $2g$-dimensional $\F_{\ell}$-vector space. Let $G$ be a finite $\ell$-solvable subgroup of
$\Aut_{\F_{\ell}}(X_{\ell})$ such that the natural faithful
representation of $G$ in $X_{\ell}$ is completely reducible. Let us
split the semisimple $\F_{\ell}[G]$-module $X_{\ell}$ into a direct
sum
$$X_{\ell}=\oplus_{i=1}^m W_i$$
of simple $\F_{\ell}[G]$-modules $W_i$. If
$d_i=\dim_{\F_{\ell}}(W_i)$ then $2g==\sum_{i=1}^m d_i$. By a
theorem of Fong-Swan \cite[Sect. 16.3, Th. 38]{SerreRep},  each
$W_i$ lifts to characteristic zero; in particular,  there is a group
homomorphism $\rho_i:G \to \GL(d_i,\C)$ whose kernel lies in the
kernel of $G \to \Aut_{\F_{\ell}}(W_i)$ (for all $i$). Clearly, the
product-homomorphism
$$\rho=\prod_{i=1}^d \rho_i: G \to \prod_{i=1}^m \GL(d_i,\C)\subset
\GL(2g,\C)$$ is an injective group homomorphism and therefore $G$ is
isomorphic to  a finite subgroup of $\GL(2g,\C)$. By a theorem of
Jordan \cite[Th. Th. 36.13]{CR}, there is a positive integer
$N=N(2g)$ that depends only on $2g$ and such that $G$ contains a
normal abelian subgroup of index dividing $N$. By deleting from $P$
all prime divisors of $N$, we may and will assume that $\ell$ does
{\sl not} divide $N$ for each $\ell\in P$.

Let us apply this observation to $G=\tilde{G}_{\ell,X,K}$. We obtain
that for all $\ell \in P$   the group $\tilde{G}_{\ell,X,K}$
contains an abelian normal subgroup $H_{\ell}$ of index dividing
$N$. For each  $\ell\in P$ let us consider the corresponding
subfield of $H_{\ell}$-invariants
$$K_{\ell}:=\left(K(X_{\ell})\right)^{H_{\ell}}\subset K(X_{\ell}).$$
Clearly, $K_{\ell}/K$ is a Galois extension of degree dividing $N$
while $\Gal(K(X_{\ell})/K_{\ell})$ coincides with commutative
(sub)group $H_{\ell}$. Since $K_{\ell}\subset K(X_{\ell})$ and
$[K_{\ell}:K]$ divides $N$  and therefore is {\sl not} divisible by $\ell$, Lemma
\ref{ramIndex} tells us that the Galois extension $K_{\ell}/K$ is
unramified with respect to every discrete valuation of $K$. (Since
$\fchar(K)=p$, its every residual characteristic is also $p$.)

Let $k$ be the (finite) algebraic closure of $\F_p$ in $K$ and let
$S$ be an absolutely irreducible normal $d$-dimensional projective
variety over $k$ whose field of rational functions $k(S)$ coincides
with $K$. Let $V\subset S$ be a {\sl smooth} open dense subset such
that the codimension of $S\setminus V$ in $S$ is, at least, $2$. Let
$V_{\ell}$ be the normalization of $V$ in $K_{\ell}/K$. Now
Zariski-Nagata purity theorem tells us that the regular map
$V_{\ell} \to V$ is an \'etale Galois cover; clearly, its degree
equals $[K_{\ell}:K]$ and therefore divides $N$.  Let $\pi_1(V)$ be
 the  fundamental group of $V$
 that classifies  \'etale covers of
$V$ (\cite{GrothendieckM}, \cite[Ch. XII, Sect. 1, pp.
241--242]{MB}. This group is a (natural) topologically finitely
generated (topological) quotient of $\Gal(K)$ \cite[Ch. XII, Sect.
1, p. 242]{MB} and the natural surjection
$$\Gal(K) \twoheadrightarrow \Gal(K_{\ell}/K)$$
factors through $\pi_1(V)$, i.e., it is the composition of the
canonical continuous surjection $\Gal(K) \twoheadrightarrow \pi_1(V)$ and a certain
continuous surjective homomorphism
$$\gamma_{\ell}:\pi_1(V) \twoheadrightarrow
\Gal(K_{\ell}/K)$$ whose kernel is an open normal subgroup in $\pi_1(V)$ of index
dividing $N$.

Since  $\pi_1(V)$ is topologically finitely generated, it contains
only finitely many open normal subgroups of index dividing $N$
(because it admits only finitely many continuous homomorphisms to
any finite group of order dividing $N$). If $\Gamma$ is the
intersection of all such subgroups then it is an open normal
subgroup in $\pi_1(V)$ that lies in the kernel of every
$\gamma_{\ell}$ for all $\ell \in P$; in particular, it has finite
index. The preimage $\Delta$ of $\Gamma$ is an open normal subgroup
of finite index in $\Gal(K)$ and the corresponding subfield of
$\Delta$-invariants $E:=\bar{K}_s^{\Delta}$ is a finite Galois
extension of $K$ that contains  $K_{\ell}$ for all $\ell \in P$.
This implies that for all $\ell \in P$ the compositum $E
K(X_{\ell})$ is abelian over $E$, because $K(X_{\ell})$ is abelian
over $K_{\ell}$. But $E K(X_{\ell})=E({X^E}_{\ell})$ where
$$X^E=X\times_{K} E$$ is an abelian
variety over $E$. Applying Theorem \ref{CM} to $X^E$ and $E$
(instead of $X$ and $K$), we conclude that $X^E$ is an abelian
variety of CM type and isogenous over $\bar{E}=\bar{K}$ to an
abelian variety that is defined over a finite field. The same is
true for $X$, since $X^E=X \times_K E$.

\end{proof}

\section{Concluding Remarks}
\label{conclude}

Theorem \ref{TateP} and Corollaries \ref{semisimpleF}(iii) and
Corollary \ref{tateFiniteC} imply readily that the following results
remain true for  all prime characteristics $p$,  including $p=2$.

\begin{itemize}

\item
 Assertions  (Sect. 1.3 and
4.4), Corollaries 1--6, Theorem 4.1, and Remark 1 of
\cite{ZarhinMZ3} remain true over any field $E$ that is finitely
generated over a finite field of arbitrary characteristic, including
$2$. Corollary 7 of \cite{ZarhinMZ3} remains true for any field $F$
of arbitrary prime characteristic, including $2$.
\item
Theorem 1.1(ii) of \cite{SZ}.

\item
Theorems 1.1, 1.4, 1.6, 1.9, 2.1  of \cite{ZarhinJussieu}.

\end{itemize}

\section{Corrigendum to \cite{ZarhinG}}

Page 317, Remark 1.3, second sentence: one has to assume
additionally that the kernel of the morphism is $W$.

Page 317, line -12: read $\Hom(Y^t,X^t)$ instead of $\Hom(Y,X)$.

Page 326, Proof of Theorem 3.4, second line: read $8g$-dimensional
instead of $4g$-dimensional.

\end{document}